\documentclass[hidelinks,onefignum,onetabnum]{siamart251216}

\usepackage{lipsum}
\usepackage{amsfonts}
\usepackage{graphicx}
\usepackage{epstopdf}
\usepackage{algorithmic}
\ifpdf
  \DeclareGraphicsExtensions{.eps,.pdf,.png,.jpg}
\else
  \DeclareGraphicsExtensions{.eps}
\fi

\newsiamremark{remark}{Remark}
\newsiamremark{hypothesis}{Hypothesis}
\crefname{hypothesis}{Hypothesis}{Hypotheses}
\newsiamthm{claim}{Claim}
\newsiamremark{fact}{Fact}
\crefname{fact}{Fact}{Facts}

\headers{AP DPM for collisional plasma}{Y. Huang and L. Wang}

\title{Asymptotic-preserving deterministic particle methods for collisional plasma models\thanks{Submitted to the editors DATE.
\funding{This work is partially supported by NSF grant DMS-2513336.}}}

\author{Yan Huang\thanks{School of Mathematics, University of Minnesota, Minneapolis, MN, 55455 
  (\email{huan2728@umn.edu}, \email{liwang@umn.edu}).}
\and Li Wang\footnotemark[2]
}

\usepackage{amsopn}

\usepackage{hyperref}
\usepackage{color}
\usepackage{enumerate}

\allowdisplaybreaks[4]

\newcommand{\rd}{\mathrm{d}}
\newcommand{\R}{\mathbb{R}}
\newcommand{\T}{\mathsf{T}}
\newcommand{\bx}{\boldsymbol{x}}
\newcommand{\bv}{{\boldsymbol{v}}}
\newcommand{\bsv}{{\boldsymbol{\mathsf{v}}}}

\newcommand{\bz}{\boldsymbol{z}}

\newcommand{\bu}{\boldsymbol{u}}
\newcommand{\bs}{\boldsymbol{s}}

\newcommand{\M}{\mathcal{M}}

\newcommand{\tr}{\operatorname{Tr}}
\newcommand{\cof}{\operatorname{cof}}
\newcommand{\Qop}{\mathcal{Q}}
\newcommand{\bxi}{\boldsymbol{\xi}}

\begin{document}

\maketitle

\begin{abstract}
   We develop novel asymptotic-preserving (AP) deterministic particle methods for collisional plasma models, including both Landau–Fokker–Planck and Dougherty collision operators, under hydrodynamic scaling. Our schemes treat the non-stiff transport part explicitly and the stiff collision operators fully implicitly through the energy-conserving Jordan--Kinderlehrer--Otto (JKO) schemes by exploiting their gradient flow structures. This approach extends our previous work on the space-homogeneous Landau equation \cite{huang2024jkolandau} and introduces a new treatment of the Dougherty operator via a projected gradient flow formulation. We identify the crucial role of Jacobian log-determinant evaluation in stiff regimes and introduce an inner-time quadrature strategy that improves both accuracy and efficiency. Furthermore, we uncover intriguing connections with score-based transport modeling, showing that both explicit and implicit score matching arise as special cases of our unified variational framework and exhibit limitations in the stiff regime. We also develop practical large-scale implementations via neural network parameterization and efficient training strategies. Various numerical examples demonstrate the structure-preserving and AP properties of our schemes for general initial data.
\end{abstract}

\begin{keywords}
Landau operator, Dougherty collision, asymptotic-preserving scheme, deterministic particle method, JKO scheme, neural network, explicit/implicit score-matching 
\end{keywords}

\begin{MSCcodes}
    65M75, 82C40, 82D10, 82M30, 68T07
\end{MSCcodes}

\section{Introduction}
Kinetic equations provide a fundamental and accurate description of plasma dynamics, capturing velocity-space physics, collective effects, and non-equilibrium behavior beyond the reach of fluid models. In this work, we focus on the following kinetic equation:
\begin{equation} \label{eqn0}
    \partial_t f + \bv \cdot \nabla_{\bx} f = \frac{1}{\varepsilon} \mathcal{Q}(f) \,,~~ \bx \in \Omega \subseteq \R^{d_x} \,, \bv \in \R^{d_v} \,,
\end{equation}
where $f = f(t, \bx, \bv)$ denotes the particle density at time $t$, position $\bx$, and velocity $\bv$. The operator $\mathcal{Q}(f)$ models particle interactions. The parameter $\varepsilon$, known as the Knudsen number, is a dimensionless quantity defined as the ratio of the mean free path to a characteristic length scale, and it characterizes the strength of the collisional effects.

Two forms of $\mathcal{Q}(f)$ are commonly used to describe the long-range interactions among charged particles in plasmas: 
\begin{itemize}
    \item[(i)] The Landau–Fokker–Planck operator (or simply the Landau operator), derived by Lev Landau in 1936 \cite{landau1958kinetic}, takes the form 
\begin{equation*}
    \mathcal{Q}_L(f,f) = \nabla_{\bv} \cdot \int_{\R^{d_v}} A(\bv-\bv_*) (f(\bv_*)\nabla_{\bv} f(\bv) - f(\bv)\nabla_{\bv_*} f(\bv_*)) \rd\bv_* \,,
\end{equation*}
where the collision kernel is
$$A(\bz) = |\bz|^{\gamma + 2} \left( I_{d_v} - \frac{\bz \otimes \bz}{|\bz|^2} \right) =: |\bz|^{\gamma + 2} \Pi(\bz) \,.$$
The matrix $A(\bz)$ is symmetric and positive semi-definite, with null space $\operatorname{span}\{\bz\}$. The parameter $\gamma$ depends on the interaction law: for inverse power-law forces proportional to $r^{-s}$, one has $\gamma = \frac{s-5}{s-1}$. In particular, $\gamma>0$ corresponds to hard potentials, $\gamma=0$ to Maxwellian molecules, and $\gamma<0$ to soft potentials. The Coulomb interaction relevant to plasma physics corresponds to $\gamma=-3$ (i.e., $s=2$) \cite{landau1958kinetic}.

\item[(ii)] The Dougherty operator \cite{DFP}, also known as the Lenard–Bernstein operator \cite{PhysRev.112.1456}, is defined as
\begin{equation*}
\mathcal{Q}_D(f) = T \Delta_{\bv} f + \nabla_\bv \cdot \big((\bv - \bu) f \big) \,,
\end{equation*}
where $T$ and $\bu$ denote the local temperature and mean velocity, defined by
\begin{equation} \label{rhouT}
    \rho = \int_{\R^{d_v}} \!\!f(\bx, \bv) \rd\bv \,,~ \bu = \frac{1}{\rho}\int_{\R^{d_v}} \!\bv f(\bx, \bv) \rd\bv \,,~ T = \frac{1}{d_v \rho}\int_{\R^{d_v}} \!\!\!|\bv-\bu|^2 f(\bx, \bv) \rd\bv \,.
\end{equation}
The Dougherty operator was introduced as a simplified local approximation to the Landau operator $\mathcal{Q}_L$, avoiding the explicit nonlocal binary-interaction structure.
\end{itemize}

In both cases, the collision operators share key structural properties, including the conservation of mass, momentum, and energy, as well as the dissipation of entropy. In particular, the Landau operators admit the following logarithmic form:
\begin{flalign}
    \Qop_L(f,f) &= \nabla_{\bv} \!\cdot\! \left(f(\bv)\!\! \int_{\R^{d_v}}\!\! A(\bv-\bv_*) \bigl(\nabla_{\bv}\log f(\bv) - \nabla_{\bv_*} \log f(\bv_*)\bigr) f(\bv_*) \rd\bv_* \right). \label{LD_op}
\end{flalign}
For a suitable test function $\varphi=\varphi(\bv)$, their weak formulations are given by
\begin{flalign*}
    \int_{\R^{d_v}} \Qop_L(f,f)\varphi \,\rd\bv
    &= -\frac{1}{2} \int_{\R^{2d_v}} \bigl(\nabla_{\bv}\varphi(\bv) - \nabla_{\bv_*}\varphi(\bv_*)\bigr) \cdot A(\bv-\bv_*) \\
    & \hspace{1.5cm} \bigl(\nabla_{\bv}\log f(\bv) - \nabla_{\bv_*}\log f(\bv_*)\bigr) f(\bv_*) f(\bv)\,\rd\bv_* \rd\bv \,.
\end{flalign*}
By choosing $\varphi=1$, $\bv$, and $|\bv|^2$, one recovers the conservation of mass, momentum, and energy, respectively. Moreover, taking $\varphi=\log f$ and using the symmetry and non-negativity of $A$,
one obtains the entropy dissipation inequality
\[\int_{\R^{d_v}} \Qop_L(f,f)\log f \,\rd\bv \le 0 \,.\]
Equality holds if and only if $f$ is a local Maxwellian equilibrium of the form:
\begin{equation}\label{local_eq}
    \M_{U}(\bv) = \frac{\rho}{(2\pi T)^{{d_v}/2}} \exp\left(-\frac{|\bv-\bu|^2}{2T}\right),
\end{equation}
where $U = (\rho, \bu, T)$ is defined in \eqref{rhouT}. 

On the other hand, for the Dougherty operator, conservation follows by direct calculation. Moreover, it can be rewritten in the form:
\begin{align}\label{Dou_op}
     \Qop_D(f) = T\nabla_\bv \!\cdot\! \left(f(\bv) \nabla_\bv \log \tfrac{f(\bv)}{\mathcal{M}_U(\bv)} \right) , 
\end{align}
which then admits the weak formulation
\begin{align*}
    \int_{\R^{d_v}} \Qop_D(f)\varphi \,\rd\bv
    = -T \int_{\R^{d_v}} \nabla_\bv \varphi(\bv) \cdot \nabla_\bv \log \tfrac{f(\bv)}{\mathcal{M}_U(\bv)}\, f(\bv)\,\rd\bv \,.
\end{align*}
Taking $\varphi=\log \tfrac{f}{\mathcal{M}_U}$ together with the conservation properties, one obtains the entropy dissipation inequalities
\[\int_{\R^{d_v}} \Qop_D(f)\log f \,\rd\bv \le 0 \,.\] 

As the Knudsen number $\varepsilon \to 0$, the solution $f$ of \eqref{eqn0} converges to the local Maxwellian \eqref{local_eq}, where the macroscopic quantities $\rho$, $\bu$, and $T$ satisfy the compressible Euler system:
\begin{align}\label{Euler}
\begin{cases}
    \partial_t \rho + \nabla_{\bx} \cdot (\rho \bu) = 0 \,, \\
    \partial_t(\rho \bu) + \nabla_{\bx} \cdot (\rho \bu \otimes \bu + p I_{d_x}) = 0 \,, \\
    \partial_t E + \nabla_{\bx} \cdot ((E + p)\bu) = 0 \,.
\end{cases}
\end{align}
Here $E = \frac{1}{2}\rho |\bu|^2 + \frac{d_v}{2}\rho T$ denotes the total energy, and the pressure $p$ is given by the constitutive relation $p = \rho T$. Accordingly, the regimes $\varepsilon \ll 1$ and $\varepsilon = \mathcal{O}(1)$ are referred to as the fluid regime and the kinetic regime, respectively.

Numerical discretization of the Landau operator is challenging due to its high dimensionality, nonlocal interactions, and the need to preserve key physical structures. Existing approaches include grid-based methods, such as finite-difference entropy schemes \cite{degond1994entropy, buet1998conservative}, fast spectral methods \cite{fourier_spectral_landau}, and formulations based on Rosenbluth potentials \cite{TAITANO2015357}; as well as particle-based methods, including stochastic Monte Carlo approaches \cite{dimarco2010direct, ROSIN2014140, DU2025114387}, hybrid methods \cite{Caflisch2008}, and deterministic particle methods \cite{carrillo2020particle, Carrillo_Jin_Tang_2022}. More recently, learning-enhanced particle methods have also been proposed \cite{HUANG2025114053, huang2024jkolandau, ilin2024transportbasedparticlemethods}.

When spatial dependence is included, especially in the presence of the multiscale parameter $\varepsilon$, the problem becomes even more challenging. In particular, it is desirable to design methods that perform uniformly well across regimes with a fixed computational budget. This requirement falls within the framework of asymptotic-preserving (AP) schemes \cite{jin1999efficient}, which have been extensively studied for various forms of \eqref{eqn0}; see \cite{Jin_2022} for a recent review. When $\mathcal Q$ takes the form of the Landau operator, however, existing AP methods are predominantly grid-based and typically rely on implicit treatment of the collision term \cite{TAITANO2015357}, penalization \cite{JIN20116420}, micro–macro decompositions \cite{GAMBA2019264}, or exponential integrators \cite{Dimarco_Li_Pareschi_Yan_2015}.

To maintain compatibility with prevailing methods for collisionless plasma simulation, the particle-in-cell (PIC) methods \cite{Birdsall2018}, we aim to develop a particle-based AP scheme in this work, which also benefits from improved scalability in high dimensions. While particle-based AP methods do exist, they are mostly designed for short-range, Boltzmann-type collisions. Only recently have AP approaches for Landau-type collisions been proposed \cite{CHEN2025113771, medaglia2025dsmcpic}. However, these methods are typically stochastic or hybrid in nature, and thus suffer from statistical noise and do not explicitly enforce structure preservation.

In this paper, we develop deterministic particle-based AP methods for both Landau and Dougherty collisions. Our approach builds on our previous work \cite{huang2024jkolandau}, exploiting the gradient flow structure of the {\it stiff} collision operators and treating them {\it fully implicitly} via the minimizing movement scheme, also known as the Jordan–Kinderlehrer–Otto (JKO) scheme \cite{jko}. A major difficulty, however, arises in the regime $\varepsilon \ll 1$, where the resulting optimization problem may fail to converge or converge to incorrect solutions if the discretization is not handled carefully and the optimizer is applied naively. We show that accurate evaluation of the Jacobian log-determinant of the transport map is crucial for success, which necessitates high-order inner-time discretization together with high-order quadrature. We also discuss extensions of our space-homogeneous formulation \cite{huang2024jkolandau} to the spatially dependent setting, either via operator splitting or through an additional regularization to delocalize the Dirac Delta function. In practice, we adopt the splitting approach due to its robustness and ease of implementation.

The main contributions of this work are as follows.
\begin{enumerate}[(i)]
    \item We propose deterministic particle-based asymptotic preserving schemes for both Landau and Dougherty collisions. The non-stiff transport part is treated explicitly, while the stiff collision term is handled implicitly via a dynamic JKO formulation. This approach extends our previous work on the space-homogeneous Landau equation \cite{huang2024jkolandau} and introduces a new treatment of the Dougherty operator through a projected gradient flow formulation. 
    \item We identify the key role of Jacobian log-determinant evaluation in stiff regimes and introduce an inner-time quadrature strategy that improves both accuracy and efficiency. We also highlight intriguing connections with score-based transport modeling. In particular, we show that both explicit and implicit score matching arise as special cases of our unified variational framework; however, both exhibit limitations in the stiff regime, as the former is unstable while the latter (with a fixed point implementation) may converge to an incorrect optimizer.
    \item We develop practical large-scale implementations via neural network parametrization and efficient training strategies.
\end{enumerate}

The rest of the paper is organized as follows. Section~\ref{sec:2} presents the variational formulations for Landau and Dougherty equations, together with optimization challenges and remedies. Section \ref{sec:3} details particle discretization, neural network parametrization, inner-time discretization, and efficient training strategies. Section \ref{sec:4} reports numerical experiments. The paper concludes in Section~\ref{sec:5}.

\section{Variational formulation and optimization}\label{sec:2}
This section presents the variational framework for computing solutions to the Landau and Dougherty equations. We examine the challenges that arise as $\varepsilon \to 0$ and highlight the necessity of accurately evaluating the Jacobian determinant of the transport map. We also discuss connections to score-based approaches, and emphasize the advantages of our variational framework for handling stiff problems.

\subsection{Dynamic JKO scheme for the collision term}
In this subsection, we derive dynamic JKO formulations for both homogeneous Landau and Dougherty equations, i.e., 
\begin{align} \label{homo}
    \partial_t f(t,\bv) = \frac{1}{\varepsilon}\mathcal Q (f)\,,
\end{align}
which serve as the core implicit collision solvers in our AP framework.  The treatment of the Landau operator largely follows our previous work \cite{huang2024jkolandau}, while for the Dougherty collision operator we present two formulations: a standard Wasserstein gradient flow (WGF) formulation and a conservative projected-WGF formulation, the latter offering significantly improved conservation properties in practical implementations.

\subsubsection{Landau collision}
We first recall the variational formulation of the homogeneous Landau equation \eqref{LD_op}. As shown in \cite{carrillo2024landau}, it is the gradient flow of the entropy functional $\mathcal{H}(f) = \int_{\R^{d_v}} f \log f \rd\bv$ with respect to the following Landau metric $d_L$, defined on the space of probability measures:
\begin{flalign*}
    d_L^2(f_0, f_1) &:= \inf_{f, \bs} ~ \int_0^1 \frac{1}{2} \int_{\R^{2d_v}} |\bs(\tau, \bv) - \bs(\tau, \bv_*)|^2_{A(\bv - \bv_*)} f(\tau, \bv)f(\tau, \bv_*) \rd\bv \rd\bv_* \rd\tau \,, \\ 
    & s.t. ~ \partial_\tau f + \nabla_{\bv} \!\cdot\! \left(f \int_{\R^{d_v}} A(\bv \!-\! \bv_*) (\bs(\tau, \bv) \!-\! \bs(\tau, \bv_*)) f(\tau, \bv_*) \rd\bv_* \right) = 0 \,, \\
    & ~~~~~ f(0, \cdot)=f_0 \,,~ f(1, \cdot)=f_1 \,,
\end{flalign*}
where $|\bz|^2_A := \bz^\top A \bz$. Building on this result, we proposed a dynamic JKO scheme for solving the homogeneous Landau equation in \cite{huang2024jkolandau}, extending previous works for Wasserstein-type gradient flows \cite{Carrillo2022primaldual, LI2020109449, Carrillo2024primaldual}. Specifically, given $f^n$, we obtain $f^{n+1}:= f(1, \cdot)$ with $f(\tau, \bv)$ solving 
\begin{equation}\label{LD_DJKO}
\begin{split}
    & \inf_{f,\bs} \varepsilon \int_0^1 \frac12 \int_{\R^{2d_v}} |\bs(\tau, \bv)-\bs(\tau, \bv_*)|^2_{A(\bv-\bv_*)} f(\tau, \bv) f(\tau, \bv_*) \rd\bv \rd\bv_* \rd\tau + 2\Delta t \mathcal{H}(f(1, \cdot)) \,, \\
    & s.t.~ \partial_\tau f + \nabla_{\bv} \!\cdot\! \left(f \int_{\R^{d_v}} A(\bv-\bv_*) (\bs(\tau, \bv)-\bs(\tau, \bv_*))f(\tau, \bv_*) \rd\bv_* \right) = 0 \,,\ f(0, \cdot) = f^n \,.
\end{split}
\end{equation}
The dynamic JKO scheme \eqref{LD_DJKO} preserves all desirable properties--namely, the local conservation of mass, momentum, and energy, as well as the local entropy dissipation property \cite[Proposition 2]{huang2024jkolandau}. In Lagrangian coordinates, \eqref{LD_DJKO} can be rewritten as: we obtain $f^{n+1} := {\T_1}{\sharp}f^n$ by solving
\begin{equation}\label{LD_LJKO}
\begin{split}
    & \inf_{\bs} ~ \varepsilon \int_0^1 \frac{1}{2} \int_{\R^{2d_v}} |\bs(\tau, \T_\tau(\bv)) - \bs(\tau, \T_\tau(\bv_*))|^2_{A(\T_\tau(\bv) - \T_\tau(\bv_*))} f^n(\bv) f^n(\bv_*) \rd\bv \rd\bv_* \rd\tau \\ 
    & \qquad + 2\Delta t \mathcal{H}({\T_1}{\sharp} f^n) \,, \\
    & s.t. ~
    \tfrac{\rd}{\rd\tau} \T_\tau(\bv) = \int_{\R^{d_v}} A(\T_\tau(\bv) - \T_\tau(\bv_*)) [\bs(\tau, \T_\tau(\bv)) - \bs(\tau, \T_\tau(\bv_*))] f^n(\bv_*) \rd\bv_* \,, \\
    & ~~~~~~ \T_0(\bv)=\bv \,.
\end{split}
\end{equation}

\subsubsection{Dougherty collision}
Below we present two variational formulations for the one-step update of the homogeneous Dougherty equation, given $f^n (\bv)$.

\paragraph{Wasserstein gradient flow}
Recall the form \eqref{Dou_op} of the Dougherty operator, and notice that the macroscopic quantities $U = (\rho, \bu, T)$ defined in \eqref{rhouT} remain unchanged along the dynamics \eqref{homo}, we denote $U^n = (\rho^n, \bu^n, T^n)$ computed from $f^n$. Then the homogeneous Dougherty can be written as a Wasserstein gradient flow: 
\begin{equation} \label{WGF}
    \partial_t f =  -\tfrac{T^n}{\varepsilon} \nabla_{W_2} \mathcal H(f|\mathcal{M}_{U^n}) =  \tfrac{T^n}{\varepsilon} \nabla_\bv \cdot \left( f \nabla_\bv \tfrac{\delta \mathcal{H}(f \mid \mathcal{M}_{U^n})}{\delta f} \right) ,\ f(0, \cdot) = f^n \,.
\end{equation}
The corresponding dynamic JKO scheme is as follows \cite{LEE2024113187}: solve  $f(\tau, \bv)$ from 
\begin{equation}\label{WGF_JKO}
\begin{split}
    & \inf_{f, \bs} ~ \varepsilon \int_0^1 \!\! \int_{\R^{d_v}} |\bs(\tau, \bv)|^2 f(\tau, \bv) \rd\bv \rd\tau + 2\Delta t T^n \mathcal{H}\left(f(1,\cdot) \mid \M_{U^n} \right) , \\
    & s.t. ~~ \partial_\tau f + \nabla_{\bv} \cdot (f\bs)= 0 \,,\ f(0, \cdot) = f^n \,,
\end{split}
\end{equation}
and then set $f^{n+1}(\bv) = f(1,\bv)$.

\paragraph{Projected Wasserstein gradient flow}
It is important to note that while \eqref{WGF} conserves mass, momentum, and energy, its semi-discrete counterpart \eqref{WGF_JKO} preserves mass and momentum only at the exact minimizer; this property may be lost if the optimization is not carried out exactly.
The energy $E^n = \frac{1}{2}\int_{\R^{d_v}} |\bv|^2 f^n \rd\bv$ is dissipating rather than conserved, even at the exact minimizer. See the discussion in Appendix~\ref{apdx:A}. This stands in sharp contrast to \eqref{LD_LJKO}, where all conservation is directly built into the constraint ODE dynamics. To restore these properties, we reinterpret the Dougherty equation as a projected Wasserstein gradient flow on a submanifold with fixed momentum and energy.

More specifically, consider the space of probability measures $\mathcal{P}_2$ endowed with the Wasserstein metric. We define the following subspace \(\mathcal{P}_2^{con}\):
\begin{equation*}
    \mathcal{P}_2^{con} = \left\{ f : f \in \mathcal{P}_2  ~~\text{and}~ \int_{\R^{d_v}} \bv f \rd \bv = \rho^n\bu^n, ~ \int_{\R^{d_v}} |\bv|^2 f \rd \bv = 2E^n \right\} .
\end{equation*}
Following Otto's calculus \cite{villani2003topics}, for any $f\in  \mathcal{P}_2^{con} $, the tangent space $\mathcal{T}_f\mathcal{P}_2^{con}$  contains admissible perturbations that preserve momentum and energy:
\begin{equation*}
    \mathcal{T}_f\mathcal{P}_2^{con} = \left\{ - \nabla_{\bv} \cdot(f \nabla_{\bv}\phi) : \nabla_{\bv}\phi \in L^2_f \,, \int_{\R^{d_v}} \nabla_{\bv}\phi f \rd\bv = 0 \,, \int_{\R^{d_v}} \bv \cdot \nabla_{\bv}\phi f \rd\bv = 0 \right\} .
\end{equation*}
Consequently, for any functional $\mathcal H: \mathcal{P}_2^{con} \mapsto \mathbb{R}$, its gradient with respect to the Wasserstein metric restricted to this subspace is given by the projection of the standard Wasserstein gradient $\nabla_{W_2} \mathcal H(f)$ onto $\mathcal{T}_f\mathcal{P}_2^{con}$. Specifically, for the entropy functional $\mathcal H(f) = \int_{\R^{d_v}} f\log f \rd \bv$, this projected gradient is
\begin{equation*}
    \operatorname{Proj}_f (\nabla_{W_2} \mathcal{H}(f)) = -\nabla_{\bv} \cdot (f \nabla_{\bv} (\log f + \tfrac{\bv-\bu}{T})) \,.
\end{equation*}
More detailed calculation is deferred to Appendix~\ref{apdx:B}. As a result, the Dougherty equation can be realized as 
\begin{equation*}
    \partial_t f = \tfrac{T^n}{\varepsilon} \nabla_\bv \cdot \left( f \left(\nabla_\bv \log f + \tfrac{\bv-\bu}{T} \right) \right) = -\tfrac{T^n}{\varepsilon} \operatorname{Proj}_f (\nabla_{W_2} \mathcal{H}(f)) \,.
\end{equation*}

This new viewpoint leads to the following conservative dynamic JKO scheme: given $f^n$, we obtain $f^{n+1}:= f(1, \cdot)$ with $f(\tau, \bv)$ solving
\begin{equation*}
\begin{split}
    & \inf_{f,\bs} ~ \varepsilon \int_0^1 \!\! \int_{\R^{d_v}} |\bs(\tau, \bv)|^2 f(\tau, \bv) \rd\bv \rd\tau + 2\Delta t T^n \mathcal{H}(f(1, \cdot)) \,, \\
    & s.t.~~ \partial_\tau f + \nabla_{\bv} \cdot (f \bs) = 0 \,,\ \int_{\R^{d_v}} \bs f \rd\bv = 0 \,,\ \int_{\R^{d_v}} \bv \cdot \bs f \rd\bv = 0 \,,\ f(0, \cdot) = f^n \,. 
\end{split}
\end{equation*}
In practice, the constraints on $\bs$ are enforced by replacing it with its projected component \( \bs^\perp\):
\begin{equation*}
    \bs^{\perp}(\tau, \bv) \!:=\! \bs(\tau, \bv) - \frac{\int_{\R^{d_v}} \bs(\tau, \bv) f(\tau, \bv) \rd\bv}{\int_{\R^{d_v}} f(\tau, \bv) \rd\bv} - \frac{\int_{\R^{d_v}} \bs(\tau, \bv) \!\cdot\! (\bv \!-\! \bu(\tau)) f(\tau, \bv) \rd\bv}{\int_{\R^{d_v}} |\bv \!-\! \bu(\tau)|^2 f(\tau, \bv) \rd\bv} (\bv - \bu(\tau)) \,.
\end{equation*}
Correspondingly, the Lagrangian form is written as: we obtain $f^{n+1}:= {\T_1}{\sharp} f^n$ by solving
\begin{equation}\label{Dou_LJKO}
\begin{split}
    & \inf_{\bs} ~ \varepsilon \int_0^1 \!\! \int_{\R^{d_v}} |\bs^{\perp}(\tau, \T_\tau(\bv))|^2 f^n(\bv) \rd\bv \rd\tau + 2\Delta t T^n \mathcal{H}({\T_1}{\sharp} f^n) \,, \\
    & s.t. ~~ \tfrac{\rd}{\rd\tau} \T_\tau(\bv) = \bs^{\perp}(\tau, \T_\tau(\bv)) \,,\ \T_0(\bv) = \bv \,.
\end{split}
\end{equation}
Here, the Lagrangian form of \(\bs^\perp\) is given by
\begin{equation}\label{s_perp}
\begin{split}
    \bs^{\perp}(\tau, \T_\tau(\bv)) := ~
    & \bs(\tau, \T_\tau(\bv)) - \frac{1}{\rho^n} \int_{\R^{d_v}} \bs(\tau, \T_\tau(\bv)) f^n(\bv) \rd\bv \\
    & - \frac{\int_{\R^{d_v}} \bs(\tau, \T_\tau(\bv)) \cdot (\T_\tau(\bv) - \bu(\tau)) f^n(\bv) \rd\bv}{\int_{\R^{d_v}} |\T_\tau(\bv) - \bu(\tau)|^2 f^n(\bv) \rd\bv} (\T_\tau(\bv) - \bu(\tau)) \,,
\end{split}
\end{equation}
where $\bu(\tau) = \int_{\R^{d_v}} \T_\tau(\bv) f^n(\bv) \rd\bv$.

\subsubsection{Density and entropy computation}
To proceed \eqref{LD_LJKO} and \eqref{Dou_LJKO}, one must evaluate the terminal energy $\mathcal{H}({\T_1}{\sharp} f^n)$. This is accomplished by invoking the relation:
\begin{equation*}
    ({\T_1}{\sharp} f^n)(\T_1(\bv)) = \frac{f^n(\bv)}{|\det \nabla_{\bv} \T_1(\bv)|} \,.
\end{equation*}
Consequently, the entropy can be computed as 
\begin{equation} \label{0402}
    \mathcal{H}(\T_1 \sharp f^n) = \int_{\R^{d_v}} f^n(\bv) \log f^n(\bv) \rd\bv - \int_{\R^{d_v}} f^n(\bv) \log |\det \nabla_{\bv} \T_1(\bv)| \rd\bv \,.
\end{equation}

To efficiently compute the determinant of the Jacobian $|\det \nabla_{\bv} \T_1(\bv)|$, we use the instantaneous change of variable formula \cite{neuralode, grathwohl2018scalable, LEE2024113187},  which describes the evolution of $|\det \nabla_{\bv} \T_\tau(\bv)|$. Specifically, we have
\\ 
\noindent\textit{Landau} \cite[Proposition 4]{HUANG2025114053}:
\begin{flalign*}
    & \frac{\rd}{\rd \tau} \log |\det \nabla_{\bv} \T_\tau(\bv)| = \\
    & \qquad \int_{\R^{d_v}} \nabla_{\bsv} \!\cdot\! \big\{ A(\bsv - \T_\tau(\bv_*)) [\bs(\tau, \bsv) - \bs(\tau, \T_\tau(\bv_*))] \big\} \big|_{\bsv=\T_\tau(\bv)} f^n(\bv_*) \rd\bv_* \,.
\end{flalign*}
\noindent\textit{Dougherty}:
\begin{flalign*}
    & \frac{\rd}{\rd \tau} \log |\det\nabla_{\bv} \T_\tau(\bv)| = \\ 
    & \qquad \nabla_{\bsv} \!\cdot\! \bs(\tau, \bsv) \big|_{\bsv=\T_\tau(\bv)} - 
    d_v \frac{\int_{\R^{d_v}} \bs(\tau, \T_\tau(\bv)) \cdot (\T_\tau(\bv) - \bu(\tau)) f^n(\bv) \rd\bv}{\int_{\R^{d_v}} |\T_\tau(\bv) - \bu(\tau)|^2 f^n(\bv) \rd\bv} \,, 
\end{flalign*}
with the initial condition $\log |\det \nabla_{\bv} \T_0(\bv)| = 0$.

\subsection{Pitfalls and remedies for accurate optimization}\label{sec:2.2}
\eqref{LD_LJKO} and \eqref{Dou_LJKO} present us an optimization problem that requires careful examination when $\varepsilon \ll 1$.  At first glance, the quadratic term arising from the transport metric---responsible for the favorable convexity properties---is scaled by $\varepsilon$, thereby weakening the overall convexity of the objective function and potentially causing difficulties for standard gradient descent methods.

However, convexity alone does not fully explain the challenge. A more fundamental issue is that small $\varepsilon$ can significantly degrade the accuracy of evaluating the objective function, particularly the entropy term, which involves computing a log-determinant and becomes increasingly dominant as $\varepsilon \rightarrow 0$. This section highlights the necessity of accurately computing this log-determinant component and proposes a practical approach to address it. Interesting connections with score-based approaches \cite{boffi2206probability, lu2024score, HUANG2025114053, ilin2024transportbasedparticlemethods} are also highlighted. 

For clarity of exposition, we illustrate our ideas using the simple heat equation
\begin{equation} \label{heat}
    \partial_t f = \frac{1}{\varepsilon}\Delta_{\bv} f\,, \quad f(0,\cdot) = f^n \,,
\end{equation}
rather than the Landau or Dougherty equations, whose additional complexities would obscure the key issues associated with small $\varepsilon$. Once the main ideas are established, we will extend the approach to the Landau and Dougherty settings. 

Viewing \eqref{heat} as the Wasserstein gradient flow of the entropy functional $\mathcal H(f) = \int_{\R^{d_v}} f \log f \rd \bv$, its Lagrangian dynamic JKO scheme then reads: 
\begin{flalign*}
    & \inf_{\bs} ~ \varepsilon \int_0^1 \!\! \int_{\R^{d_v}} |\bs(\tau, \T_\tau(\bv))|^2 f^n(\bv) \rd\bv \rd\tau - 2\Delta t \int_{\R^{d_v}} \log|\det \nabla_{\bv} \T_1(\bv)| f^n(\bv) \rd\bv \,, \\
    & s.t. ~~ \tfrac{\rd}{\rd\tau} \T_\tau(\bv) = \bs(\tau, \T_\tau(\bv)) \,,\ \T_0(\bv) = \bv \,, \\
    & \qquad \tfrac{\rd}{\rd\tau} \log|\det \nabla_{\bv} \T_\tau(\bv)| = \nabla_{\bsv} \!\cdot\! \bs(\tau, \bsv) \big|_{\bsv=\T_\tau(\bv)} \,,\ \log|\det \nabla_{\bv} \T_0(\bv)| = 0 \,.
\end{flalign*}
Integrating the second ODE for the log-determinant and substituting it into the objective function to eliminate the second term, we obtain
\begin{equation}\label{heat_jko_div}
\begin{aligned}
    & \inf_{\bs} ~ \int_0^1 \!\! \int_{\R^{d_v}} \left[\varepsilon |\bs(\tau, \T_\tau(\bv))|^2 - 2 \Delta t \nabla_{\bsv} \!\cdot\! \bs(\tau, \bsv) \big|_{\bsv=\T_\tau(\bv)} \right] f^n(\bv) \rd\bv \rd\tau \,, \\
    & s.t. ~~ \tfrac{\rd}{\rd\tau} \T_\tau(\bv) = \bs(\tau, \T_\tau(\bv)) \,,\ \T_0(\bv) = \bv \,.
\end{aligned}
\end{equation}
We now discuss different choices of inner-time (i.e., $\tau$) discretization and integration. 

\paragraph{One step explicit discretization is unstable} If we discretize the constraint ODE in \eqref{heat_jko_div} using a single forward Euler step, and approximate the time integral in the objective by a left-endpoint quadrature rule, we obtain the minimization problem
\begin{equation} \label{esm0}
    \inf_{\bs} \int_{\R^{d_v}} \left[\varepsilon |\bs(\bv)|^2 - 2 \Delta t \nabla_{\bv} \!\cdot\! \bs(\bv)\right] f^n(\bv) \rd\bv \,.
\end{equation}
Interestingly, after rescaling $\bs \mapsto -\frac{\Delta t}{\varepsilon} \bs$, \eqref{esm0} reduces exactly to the classical (explicit) score matching loss \eqref{esm} \cite{hyvarinen05a}:
\begin{equation}\label{esm}
    \inf_{\bs} \int_{\R^{d_v}} \left[|\bs(\bv)|^2 + 2 \nabla_{\bv} \!\cdot\! \bs(\bv)\right] f^n(\bv) \rd\bv \,.
\end{equation}
Consequently, the optimizer of \eqref{esm0} corresponds to the score at the current step, namely $\bs= -\frac{\Delta t}{\varepsilon}\nabla\log f^n$. When $\varepsilon \ll 1$, this results in an unstable particle update.

\paragraph{One step implicit discretization may lead to a wrong optimizer}
Since the ODE dynamic in \eqref{heat_jko_div} is stiff, it is tempting to adopt an implicit discretization. Coupled with the right-endpoint quadrature rule for the objective function, this leads to the following constrained optimization problem:
\begin{equation}\label{ism}
\begin{aligned}
    & \inf_{\bs} ~ \int_{\R^{d_v}} [\varepsilon |\bs(\T(\bv))|^2 - 2\Delta t (\nabla \!\cdot\! \bs)(\T(\bv))] f^n(\bv) \rd\bv \,, \\
    & s.t. ~~ \T(\bv) = \bv + \bs(\T(\bv)) \,.
\end{aligned}
\end{equation}
While \eqref{ism} may appear reasonable at first glance, as one might expect that the optimizer would be $\bs = -\frac{\Delta t}{\varepsilon} \nabla \log (\T \sharp f^n) = -\frac{\Delta t}{\varepsilon} \nabla \log f^{n+1}$, we show below that this intuition is incorrect. The resulting problem is not only difficult to optimize, but also fails to yield the desired implicit particle update.

To see this, differentiating the constraint, we have 
\begin{equation*}
    \nabla_{\bv} \T(\bv) - I = (\nabla \bs)(\T(\bv)) \nabla_{\bv} \T(\bv) \,,
\end{equation*}
which yields $(\nabla \bs)(\T(\bv)) = I - (\nabla_{\bv} \T(\bv))^{-1}$, and consequently
\begin{equation*}
    (\nabla \cdot \bs)(\T(\bv)) = d_v - \tr\left((\nabla_{\bv} \T(\bv))^{-1}\right).
\end{equation*}
Substituting this expression into \eqref{ism} and rewriting $\bs(\T(\bv))$ as $\T(\bv) - \bv$, we obtain the following unconstrained optimization problem with respect to the transport map
\begin{equation}\label{ism_tr}
    \inf_{\T} ~ \int_{\R^{d_v}} \left(\varepsilon |\T(\bv) - \bv|^2 + 2\Delta t \tr\left((\nabla_{\bv} \T(\bv))^{-1}\right)\right) f^n(\bv) \rd\bv \,.
\end{equation}

Now, if we use Monge's formulation to represent the Wasserstein distance, the JKO scheme for the heat equation \eqref{heat} takes the form 
\begin{align} \label{0409}
    \inf_\T ~ \int_{\R^{d_v}} \varepsilon|\T(\bv)-\bv|^2 f^n(\bv) \rd \bv + 2\Delta t \mathcal{H}(\T\sharp f^n)\,.
\end{align}
Invoking the relation \eqref{0402} and discarding terms that depend only on $f^n$, it becomes
\begin{equation}\label{heat_monge}
    \inf_{\T}  \int_{\R^{d_v}} \left( \varepsilon |\T(\bv) - \bv|^2 - 2\Delta t \log|\det \nabla_{\bv} \T(\bv)| \right) f^n(\bv) \rd\bv \,.
\end{equation}

Comparing \eqref{ism_tr} with \eqref{heat_monge}, we observe that \eqref{ism_tr} is more unstable from an optimization perspective. This is because if we assume that $\nabla_{\bv} \T(\bv)$ is invertible and has eigenvalues $\lambda_1, \cdots, \lambda_{d_v}$, then the second terms in \eqref{ism_tr} and \eqref{heat_monge} are given, respectively, by
\begin{equation*}
  \tr\left((\nabla_{\bv} \T(\bv))^{-1}\right) = \sum_{i=1}^{d_v} \frac{1}{\lambda_i} 
   \quad \text{and} \quad 
     -\log|\det \nabla_{\bv} \T(\bv)| = - \sum_{i=1}^{d_v} \log |\lambda_i|  \,.
\end{equation*}
As $\lambda_i \rightarrow 0$, the expression in \eqref{ism_tr} becomes significantly more singular near small singular values of $\nabla_{\bv} \T(\bv)$, leading to steeper gradients and less stable optimization.

More importantly, by writing down the optimality conditions, we have 
\begin{flalign}
    \T(\bv) - \bv &= -\tfrac{\Delta t}{\varepsilon} \left((\nabla \T^{-1})^\top \nabla\log f^{n+1} + \Delta \T^{-1} \right) (\T(\bv)) ~~ \text{if $\det\nabla_{\bv} \T(\bv) > 0$} \,, \label{opt_tr} \\
    \T(\bv) - \bv &= -\tfrac{\Delta t}{\varepsilon} \nabla\log f^{n+1}(\T(\bv)) \,, \label{opt_det}
\end{flalign}
corresponding to \eqref{ism_tr} and \eqref{heat_monge}, respectively. 

It is clear from \eqref{opt_det} that the JKO formulation with Monge's representation of the Wasserstein distance \eqref{0409} yields the {\it implicit score} and consequently leads to a desirable implicit particle update. In contrast, the one-step approximation of the dynamic formulation of the Wasserstein distance \eqref{ism} introduces a discrepancy from the implicit score. Indeed, when $\varepsilon = \mathcal{O}(1)$, we typically have $\T = I_{d_v} + \mathcal{O}(\Delta t)$, and thus \eqref{opt_tr} and \eqref{opt_det} agree at leading order. However, when $\varepsilon \ll 1$, the two expressions differ significantly. More details of the derivation are provided in Appendix~\ref{apdx:C}.

It is very interesting to note that \eqref{ism} is closely related to an {\it implicit score matching approach}. Indeed, starting from the score matching perspective, one seeks an approximation $\bs$ to the current score function $\nabla_\bv \log f^n$. Since we already have access to samples from $f^n$, this is the explicit score matching formulation discussed earlier in \eqref{esm}.
In comparison, implicit score matching aims to approximate the future score $\nabla_\bv \log f^{n+1}$ by seeking a function $\bs$ that matches this quantity. Formally, this amounts to solving
\begin{align} \label{0403}
    \inf_{\bs} \int_{\R^{d_v}} \left|\bs(\bv) + \tfrac{\Delta t}{\varepsilon} \nabla_\bv \log f^{n+1}(\bv) \right|^2 f^{n+1} (\bv)\rd \bv \,,
\end{align}
where $f^{n+1} = \T \sharp f^n$, and $\T(\bv) = \bv + \bs(\T(\bv))$. Since $f^{n+1}$ depends on $\bs$ in an intertwined manner, it is natural to consider a fixed-point iteration for \eqref{0403}, i.e., to construct a sequence of approximations $\{\bs_{(m)}\}_m$ by solving 
\begin{align} \label{0404}
    \inf_{\bs_{(m)}} \int_{\R^{d_v}} \left|\bs_{(m)}(\bv) + \tfrac{\Delta t}{\varepsilon} \nabla_\bv \log f^{n+1}_{(m-1)}(\bv) \right|^2 f_{(m-1)}^{n+1} (\bv)\rd \bv \,,
\end{align}
where $f_{(m-1)}^{n+1} = \T_{(m-1)} \sharp f^n$ and 
\begin{align} \label{0405}
    \T_{(m-1)}(\bv) = \bv + \bs_{(m-1)}(\T_{(m-1)}(\bv)) \,.
\end{align}
Since $\T_{(m-1)}$ is independent of $\bs_{(m)}$, \eqref{0404} simplifies to  
\begin{align} \label{0406}
    \inf_{\bs_{(m)}} \int_{\R^{d_v}} \left[|\bs_{(m)}(\bv)|^2 - 2\tfrac{\Delta t}{\varepsilon} \nabla_\bv \!\cdot\! \bs_{(m)}(\bv) \right]^2 f_{(m-1)}^{n+1} (\bv)\rd \bv \,.
\end{align}
Combining \eqref{0406} with \eqref{0405}, if the fixed-point iteration converges, that is, $\T_{(m-1)} \rightarrow \T$, $\bs_{(m-1)} \rightarrow \bs$, $\bs_{(m)} \rightarrow \bs$ in an appropriate sense, then \eqref{0406} together with \eqref{0405} can be interpreted as solving 
\begin{equation} \label{ism2}
\begin{aligned}
    & \inf_{\bs} ~ \int_{\R^{d_v}} [ |\bs(\bv)|^2 - 2\tfrac{\Delta t}{\varepsilon} \nabla_{\bv} \!\cdot\! \bs(\bv)] f^{n+1}(\bv) \rd\bv \,, \\
    & s.t. ~~ \T(\bv) = \bv + \bs(\T(\bv)) \,,
\end{aligned}
\end{equation}
which is exactly the same form as \eqref{ism}. The previous discussion has already addressed the issues with this approach. 

Perhaps a more intuitive explanation can be seen by comparing \eqref{ism2} with \eqref{0403}. A key component in score matching is the omission of the nonlinear term $\int_{\R^{d_v}} |\nabla_{\bv} \log f|^2 f \rd\bv$ in the loss function, which is justified when $f$ is independent of the score function being learned. However, in the implicit formulation \eqref{0403}, this term should not be neglected, since $f^{n+1}$ depends on the unknown score function in a nonlinear and coupled manner. The fixed-point iteration effectively drops this term, which can lead to inconsistencies and, ultimately, problematic behavior.

\begin{remark}
    We would like to clarify that our criticism of implicit score matching \eqref{0403} pertains to its implementation via fixed-point iteration \eqref{0404}; we do not claim that \eqref{0403} is incorrect. On the contrary, it is theoretically sound, but challenging to implement in practice.
    One might further ask: since \eqref{0409} provides the correct implicit update (i.e.,  \eqref{opt_det}), why not use it directly? While this is feasible for the heat equation, the key issue is that Monge’s formulation does not readily generalize to the Landau metric or to projected Wasserstein gradient flows for Dougherty collision. In contrast, our dynamic formulation offers significantly greater flexibility in these settings.
\end{remark}

\begin{remark}
    If one instead uses a forward Euler discretization together with the right-endpoint quadrature rule, a similar issue to that in \eqref{ism} arises. In this case, \eqref{heat_jko_div} becomes 
\begin{equation*}
\begin{aligned}
    & \inf_{\bs} ~  \int_{\R^{d_v}} \left[\varepsilon |\bs(\T(\bv))|^2 - 2 \Delta t (\nabla \!\cdot\! \bs) ( \T(\bv)) \right] f^n(\bv) \rd\bv \,, \\
    & s.t. ~~ \T(\bv) = \bv + \bs(\bv) \,,
\end{aligned}
\end{equation*}
which is equivalent to the unconstrained formulation
\begin{align} \label{0401}
    \inf_{\bs} ~  \int_{\R^{d_v}} \left[\varepsilon |\bs(\bv + \bs (\bv))|^2 - 2 \Delta t (\nabla \!\cdot\! \bs)( \bv + \bs(\bv)) \right] f^n(\bv) \rd\bv \,.
\end{align}
The optimality condition of \eqref{0401} is given by
\begin{equation*}
    \bs = -\tfrac{\Delta t}{\varepsilon} \nabla\log f^{n+1} + \tfrac{f^n}{f^{n+1}} \left[\tfrac{\Delta t}{\varepsilon} (\nabla(\nabla \cdot \bs))\circ \T - (\nabla\bs) \circ \T \right] \,,
\end{equation*}
where $f^{n+1} = \T \sharp f^n$. Therefore, when $\varepsilon \ll \Delta t$, $\bs$ deviates significantly from the implicit score $-\frac{\Delta t}{\varepsilon} \nabla\log f^{n+1}$, again leading to an incorrect update in the stiff regime. Derivations are given in Appendix~\ref{apdx:C}. 
\end{remark}

\paragraph{Multi-step inner discretization is necessary}
All in all, the above discussion suggests that a one-step inner discretization is insufficient for stiff problems, and that a more accurate evaluation of the log-determinant term is necessary. It also highlights that our dynamical JKO formulation accommodates a much broader class of discretization strategies, providing a principled and flexible framework for handling stiffness. 

In practice, we discretize the inner-time integral terms in \eqref{heat_jko_div} using a Gauss--Legendre quadrature rule. Specifically, let $\{\tau_k\}_{k=1}^K$ be the Gauss--Legendre nodes on $[0,1]$ and set $\tau_0=0$ and $\tau_{K+1}=1$. These points partition the inner-time interval $[0,1]$ into $K+1$ subintervals $[\tau_{k}, \tau_{k+1}]$, $k=0, \ldots, K$. On each subinterval, we solve the ODE constraint in \eqref{heat_jko_div} using a high-order ODE solver to obtain $\bz^k(\bv)\approx \T_{\tau_k}(\bv)$. We then approximate the loss by
\begin{equation}\label{GL}
    \sum_{k=1}^{K} q_k \int_{\R^{d_v}}
    \left[
    \varepsilon \, |\bs(\tau_k,\bz^k(\bv))|^2
    - 2\Delta t \, (\nabla\!\cdot\!\bs)(\tau_k,\bz^k(\bv))
    \right] f^n(\bv)\,\rd\bv,
\end{equation}
where $q_k$ denotes the quadrature weight associated with the node $\tau_k$.

It is also important to relate our problem \eqref{heat_jko_div} to \textit{continuous flow-based generative models}. In fact, when \(\varepsilon=0\), it reduces exactly to continuous normalizing flows (CNFs) \cite{neuralode, grathwohl2018scalable}. For \(0<\varepsilon\ll \Delta t\), \eqref{heat_jko_div} is closely connected to OT-Flow \cite{pmlr-v119-finlay20a, Onken_Wu_Fung_Li_Ruthotto_2021}. In these works, it has also been recognized that accurate evaluation of the log-determinant term is crucial in practice. A key distinction in our formulation, compared to these prior works on generative models, is that instead of solving the auxiliary ODE in \eqref{heat_jko_div}, we rewrite it in terms of inner-time integrals, since the associated velocity fields do not explicitly depend on \(\log|\det\nabla_{\bv}\T_\tau|\). As a result, \(\log|\det\nabla_{\bv}\T_1|\) can be computed directly via quadrature using the nodes generated by the ODE solver, without introducing additional auxiliary variables. This leads to improved efficiency, and quadrature is typically more accurate than direct ODE propagation for this task. Our previous discussion also provides a clearer justification for the necessity of multistep discretization and integration.

\subsection{Extension to the full equation}\label{sec:2.3}
One challenge in extending the space-independent collision step to the spatially dependent setting is that collisions are inherently local in $\bx$. Consequently, a mechanism is needed to identify neighboring particles. A natural approach is to use spatial binning: partition the domain into cells and assign each particle to a cell. Particles within the same cell are then allowed to interact (collide). This leads naturally to an operator-splitting scheme for particle updates, separating the free transport step from the collision step \cite{Neunzert_Struckmeier_1995, MC_Boltz}. 

Here, we consider a first-order splitting scheme; higher-order extensions should be straightforward. The free-transport step $\partial_t f + \bv \cdot \nabla_{\bx} f = 0$ is advanced exactly:
\begin{equation}\label{transport_step}
    f^*(\bx, \bv) = f^n(\bx - \Delta t\bv, \bv) \,.
\end{equation}
We then partition the $\bx-$domain into a collection of spatial cells $\Omega = \cup_{l=1}^{N_c}\Omega_l$ and define the cell average
\begin{equation}\label{cell_avg}
    f_l^*(\bv) = \frac{1}{|\Omega_l|} \int_{\Omega_l} f^*(\bx, \bv) \,\rd\bx ~~\text{if}~ x \in \Omega_l \,.
\end{equation}
In each cell $\Omega_l$, we solve the following collision problem for one time step 
\begin{equation*}
    \partial_t f = \frac{1}{\varepsilon} \Qop(f) \,,~~ f(0, \bv) = f_l^*(\bv) \,, 
\end{equation*}
and obtain $f_l^{n+1}(\bv):= f(\Delta t, \bv)$. This step is treated implicitly through the variational scheme introduced above. Finally,  one can reconstruct $f^{n+1}(\bx, \bv)$ from the cell-average $\{f_l^{n+1}(\bv)\}_{l=1}^{N_c}$. Further details will be provided in the next section when we present the fully discrete scheme.

We also note that there is an alternative way to identify neighboring particles, based on a delocalization in $\bx$ via convolution with an auxiliary kernel. For instance, in the case of the Landau collision operator, one may consider a delocalized approximation \cite{bailo2024collisional}: 
\begin{equation*}
    \Qop_L^{\kappa}(f, f) = \nabla_{\bv} \cdot \left( f\int_{\Omega \times \R^{d_v}} \kappa(\bx-\bx_*) A(\bv-\bv_*) (\nabla_{\bv}\log f - \nabla_{\bv_*} \log f_*) f_* \rd\bv_* \rd\bx_* \right) ,
\end{equation*}
where $f := f(\bx, \bv)$ and $f_* := f(\bx_*, \bv_*)$. When the kernel $\kappa$ is a Dirac delta function, $\mathcal Q_L^\kappa$ reduces to $\mathcal Q_L$. For smooth kernel $\kappa$, $\mathcal Q_L^\kappa$ is often referred to as the fuzzy Landau operator \cite{gualdani2025fuzzylandau}.

With $\mathcal{Q}_L^\kappa$, we extend the dynamic JKO formulation for the homogeneous Landau equation to the spatially dependent setting \eqref{eqn0} as follows. Given $f^n(\bx,\bv)$, we solve $f(\tau, \bx, \bv)$ by:
\begin{equation*}
\begin{split}
    & \inf_{f,\bs} ~ \varepsilon \int_0^1 \frac12 \int_{(\Omega \times \R^{d_v})^2} \!\! \kappa(\bx - \bx_*) |\bs - \bs_*|^2_{A(\bv - \bv_*)} ff_* \rd\bv_* \rd\bv \rd\bx_* \rd\bx \rd\tau + 2\Delta t \mathcal{S}(f(1, \cdot, \cdot)) \,, \\
    & s.t. ~~ \partial_\tau f + \Delta t \bv \!\cdot\! \nabla_{\bx}f + \nabla_{\bv} \!\cdot\! \left[f \! \int_{\Omega \times \R^{d_v}} \!\! \kappa(\bx-\bx_*) A(\bv-\bv_*)(\bs-\bs_*) f_* \rd\bv_* \rd\bx_* \right] = 0 \,, \\
    & ~~~~~~ f(0, \cdot, \cdot) = f^n \,,
\end{split}
\end{equation*}
where $\bs := \bs(\tau, \bx, \bv)$, $\bs_* := \bs(\tau, \bx_*, \bv_*)$ and $\mathcal{S}(f) = \int_{\Omega \times \R^{d_v}} f \log f \rd\bv\rd\bx$ is the global entropy. We then set $f^{n+1}(\bx, \bv):= f(1, \bx, \bv)$. 

Compared with the operator-splitting approach described above, the main advantage of this delocalized formulation is that it avoids splitting errors and provides a global entropy dissipation mechanism. However, in the presence of spatial discontinuities, such as the Riemann problem, it may be difficult to parametrize $\bs(\tau,\bx,\bv)$ globally using the neural networks considered later, since such models often struggle to approximate discontinuous profiles. In contrast, operator splitting decouples the spatial dependence and reduces the collision step to a collection of independent homogeneous problems within each cell, making the representation of $\bs(\tau,\bv)$ much simpler. For this reason, we adopt the operator-splitting strategy in the rest of this paper.

\section{Numerical schemes}\label{sec:3}
In this section, we present the fully discrete numerical scheme along with implementation details. This includes a particle-based spatial discretization, neural network approximations, and training procedures. We also highlight the structure-preserving properties of the proposed method.

\subsection{Particle discretization and neural network}
Let $\{\bx_i^0\,, \bv_i^0 \}_{i=1}^N$ denote $N$ particles sampled from the initial data $f^0$, each assigned a weight $w = \frac{m}{N}$, where $m:=\int_{\Omega \times \R^{d_v}} f^0 \rd\bv\rd\bx$ is the total mass. Let $\{\Omega_l\}_{l=1}^{N_c}$ be a fixed partition of the spatial domain $\Omega$. In what follows, we present the particle update from time $t_n = n\Delta t$ to $t_{n+1} = (n+1)\Delta t$ for $n=0,1,2,\cdots$, in an operator splitting framework as discussed in Section~\ref{sec:2.3}. 

\paragraph{Transport step}
The particle interpretation of the transport step \eqref{transport_step} is
\begin{equation*}
    \bx^*_i = \bx_i^n + \Delta t \bv_i^n \,,~ i=1,\cdots,N \,.
\end{equation*}
To implement periodic and reflecting boundary conditions in space, we follow the approach described in \cite{MC_Boltz}. For clarity, we present them in the setting of a one-dimensional spatial domain $\Omega_x = [L_{-}, L_{+}]$.
\begin{equation*}
\text{Periodic B.C.} \quad
(\bx_i^* \,, \bv^*_i) = \left \{
\begin{aligned}
    & (\bx^*_i \,, \bv_i^n) \,,~~ \text{if}~~ L_{-} \leq \bx^*_i \leq L_{+} \,, \\
    & (\bx^*_i + (L_{+} - L_{-}) \,, \bv_i^n) \,,~~ \text{if}~~ \bx^*_i < L_{-} \,, \\
    & (\bx^*_i - (L_{+} - L_{-}) \,, \bv_i^n) \,,~~ \text{if}~~ \bx^*_i > L_{+} \,.
\end{aligned}
\right.
\end{equation*}
\begin{equation*}
\text{Reflecting B.C.} \quad
(\bx_i^* \,, \bv^*_i) = \left \{
\begin{aligned}
    & (\bx^*_i \,, \bv_i^n) \,,~~ \text{if}~~ L_{-} \leq \bx^*_i \leq L_{+} \,, \\
    & (L_{-} - (\bx^*_i - L_{-}) \,, -\bv_i^n) \,,~~ \text{if}~~ \bx^*_i < L_{-} \,, \\
    & (L_{+} - (\bx^*_i - L_{+}) \,, -\bv_i^n) \,,~~ \text{if}~~ \bx^*_i > L_{+} \,.
\end{aligned}
\right.
\end{equation*}

The density remains unchanged, since both periodic and reflecting boundary conditions are volume-preserving. In other words,
\begin{equation*}
    f^*(\bx_i^*, \bv_i^*) = f^n(\bx_i^n, \bv_i^n) \,,~ i=1,\cdots,N \,.
\end{equation*}

\paragraph{Binning}
We assign all particles $\{(\bx_i^* \,, \bv^*_i)\}_{i=1}^N$ to the cells $\{\Omega_l\}_{l=1}^{N_c}$ according to their spatial location. We then interpret the set $\{\bv_i^*\}_{i \in \Omega_l}$ as particles sampled from the approximated distribution $f_l^*(\bv)$ \eqref{cell_avg}, where $i \in \Omega_l$ represents the indices of particles whose spatial positions satisfy $\bx_i^* \in \Omega_l$.

\paragraph{{Collision step}}
Since the collision step is applied independently within each cell, we describe the discretization for a representative cell $\Omega_l$. 

To conserve the total mass, due to the definition of the local density in \eqref{cell_avg}, we reweight the particles according to their assigned cells. More precisely, let $\tilde{w}$ denote the particle weight and $N_l$ the number of particles in $\Omega_l$. Then it should satisfy
\begin{equation*}
    \underbrace{\int_{\R^{d_v}} \tilde{w} \sum_{i \in \Omega_l} \delta(\bv-\bv_i^*) \rd\bv}_{\int_{\R^{d_v}} f_l^*(\bv)\rd\bv} = \underbrace{\frac{1}{|\Omega_l|} \int_{\Omega_l \times \R^{d_v}} \!\! w \sum_{i = 1}^N \delta(\bx-\bx_i^*) \delta(\bv-\bv_i^*) \rd\bx \rd\bv}_{\frac{1}{|\Omega_l|}\int_{\Omega \times \R^{d_v}} f^*(\bx, \bv)\rd \bv \rd\bx}  \,,
\end{equation*}
The left-hand side equals $\tilde{w} N_l$, while the right-hand side equals $\frac{w N_l}{|\Omega_l|}$, which yields $\tilde{w} = \frac{w}{|\Omega_l|}$.

The resulting semi-discrete variational schemes are: \\
\noindent\textit{Landau}:
\begin{equation}\label{LD_PJKO}
\begin{aligned}
    & \inf_{\bs}~ \tilde{w}^2 \!\! \sum_{i, j \in \Omega_l} \varepsilon \int_0^1 \frac12 |\bs(\tau, \T_\tau(\bv_i^*)) - \bs(\tau, \T_\tau(\bv_j^*))|^2_{A(\T_\tau(\bv_i^*) - \T_\tau(\bv_j^*))} \rd\tau -2 \Delta t \tilde{w} \sum_{i \in \Omega_l} \ell_i  \,, \\
    & \text{s.t.}~ \tfrac{\rd}{\rd\tau} \T_\tau(\bv_i^*) = \tilde{w} \sum_{j \in \Omega_l} A(\T_\tau(\bv_i^*) \!-\! \T_\tau(\bv_j^*)) [\bs(\tau, \T_\tau(\bv_i^*)) \!-\! \bs(\tau, \T_\tau(\bv_j^*))] \,, \T_0(\bv_i^*)=\bv_i^* \,, \\
    & \qquad \ell_i = \tilde{w}\displaystyle\sum_{j \in \Omega_l} \int_0^1 \nabla_{\bsv} \!\cdot\! [A(\bsv - \T_\tau(\bv_j^*))(\bs(\tau, \bsv) - \bs(\tau, \T_\tau(\bv_j^*)))] \big|_{\bsv = \T_\tau(\bv_i^*)} \,\rd\tau \,.
\end{aligned}
\end{equation}

\noindent\textit{Dougherty}:
\begin{equation}\label{Dou_PJKO}
\begin{split}
    &\inf_{\bs}~ \tilde{w} \sum_{i \in \Omega_l} \varepsilon \int_0^1 |\bs^\perp(\tau, \T_\tau(\bv_i^*))|^2 \rd\tau - 2 \Delta t T_l^* \tilde{w} \sum_{i \in \Omega_l} \ell_i  \,, \\
    &\text{s.t.}~ \tfrac{\rd}{\rd\tau} \T_\tau(\bv_i^*) = \bs^{\perp}(\tau, \T_\tau(\bv_i^*)) \,,~ \T_0(\bv_i^*)=\bv_i^* \,, \\
    & \qquad \ell_i = \int_0^1 \nabla_{\bsv} \!\cdot\! \bs^\perp(\tau, \bsv) \big|_{\bsv = \T_\tau(\bv_i^*)} \,\rd\tau \,.
\end{split}
\end{equation}
Here, the particle discretized version of $\bs^{\perp}$ \eqref{s_perp} is given by
\begin{flalign*}
    \bs^{\perp}(\tau, \T_\tau(\bv_i^*)) = \bs(\tau, &\T_\tau(\bv_i^*)) - \frac{1}{N_l}\sum_{j \in \Omega_l} \bs(\tau, \T_\tau(\bv_j^*)) \\
    & - \frac{\sum_{j \in \Omega_l}  \bs(\tau, \T_\tau(\bv_j^*)) (\T_\tau(\bv_j^*) - \bu_{l}(\tau))}{\sum_{j \in \Omega_l} |\T_\tau(\bv_j^*) - \bu_{l}(\tau)|^2 } (\T_\tau(\bv_i^*) - \bu_{l}(\tau)) \,,
\end{flalign*}
where $\bu_{l}(\tau) = \frac{1}{N_l} \sum_{i \in \Omega_l} \T_\tau(\bv_i^*)$.

Upon optimization, particles and densities are updated according to
\begin{flalign*}
    & \bx_i^{n+1} = \bx_i^* \,,~~ \bv_i^{n+1} = \T_1(\bv_i^*) \,, \\
    & f^{n+1}(\bx_i^{n+1}, \bv_i^{n+1}) = f^*(\bx_i^*, \bv_i^*) / \exp(\ell_i) \,.
\end{flalign*}

In practice, to approximate $\bs(\tau, \bv)$, we employ a multi-layer perceptron (MLP) $\bs_\theta(\tau, \bv): \R^{d_v+1} \to \R^{d_v}$ with parameters $\theta$. Specifically, an $L$-layer MLP with $m$ neurons per hidden layer, taking input $\bar{\bv} = (\tau, \bv)$, is defined as
\begin{equation}\label{MLP}
\bs_\theta(\bar{\bv}) = W_L \sigma\Big( W_{L-1} \big( \cdots \sigma( W_2 \sigma( W_1 \bar{\bv} + b_1 ) + b_2 ) \cdots \big) + b_{L-1} \Big) + b_L \,,
\end{equation}
where $\theta = \{W_l, b_l\}_{l=1}^L$ are trainable, time-independent parameters. The weight matrices and biases satisfy: $W_1 \in \R^{m \times (d_v+1)}$, $\{W_l\}_{l=2}^{L-1} \in \R^{m \times m}$, $W_L \in \R^{d_v \times m}$, $\{b_l\}_{l=1}^{L-1} \in \R^{m}$, $b_L \in \R^{d_v}$. The function $\sigma$ is an element-wise activation function. Compared with traditional representations such as polynomial or Fourier bases, neural networks offer notable advantages: they scale effectively with high-dimensional inputs and can naturally incorporate the dependence on the inner-time variable $\tau$ through input.

To conclude this section, we summarize the structural properties of the proposed approach. 
For each cell $\Omega_l$, we define the discrete local density, velocity, temperature, and entropy as:
\begin{equation*}
    (\rho_l^n, \bu_l^n, T_l^n, H_l^n) = \left(\tilde{w} N_l , \frac{1}{N_l} \sum_{i \in \Omega_l} \bv_i^n , \frac{1}{d_v N_l} \sum_{i \in \Omega_l} |\bv_i^n-\bu_l^n|^2 , \tilde{w} \sum_{i \in \Omega_l} \log f^n(\bx_i^n, \bv_i^n) \right) .
\end{equation*}
Correspondingly, the global quantities are defined by:
\begin{equation*}
    (\rho_{tot}^n, \bu_{tot}^n, E_{tot}^n, H_{tot}^n) = \left(\sum_{i=1}^N w ~, w \sum_{i=1}^N \bv_i^n ~, w\sum_{i=1}^N |\bv_i^n|^2 ~, w \sum_{i=1}^N \log f^n(\bx_i^n, \bv_i^n) \right) .
\end{equation*}

\begin{proposition}
    The proposed schemes \eqref{LD_PJKO} and \eqref{Dou_PJKO} satisfy:
    \begin{enumerate}[(i)]
        \item Local (collision step). $(\rho_l^{n+1}, \bu_l^{n+1}, T_l^{n+1}) = (\rho_l^*, \bu_l^*, T_l^*)$ and $H_l^{n+1}\le H_l^*$ for each cell $\Omega_l$.
        \item Global (periodic BC). $(\rho_{tot}^{n+1},\bu_{tot}^{n+1},E_{tot}^{n+1}) =(\rho_{tot}^{n},\bu_{tot}^{n},E_{tot}^{n})$ and $H_{tot}^{n+1}\le H_{tot}^n$.
    \end{enumerate}
\end{proposition}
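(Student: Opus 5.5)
We work at the level of the semi-discrete-in-$\tau$ particle systems \eqref{LD_PJKO} and \eqref{Dou_PJKO}, taking the inner-time ODEs as solved exactly. Conservation will come from the structure of the constraint ODEs, and entropy decay from comparing the optimizer with the trivial admissible control.

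For (i), mass is immediate: the collision step moves no particle between cells and keeps the weight $\tilde w$, so $\rho_l^{n+1}=\tilde w N_l=\rho_l^*$.

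For momentum and energy in the Landau case, write $\bz_i(\tau)=\T_\tau(\bv_i^*)$, $A_{ij}=A(\bz_i-\bz_j)$ and $\bs_i=\bs(\tau,\bz_i)$. Then
\[
\tfrac{\rd}{\rd\tau}\sum_{i}\bz_i=\tilde w\sum_{i,j}A_{ij}(\bs_i-\bs_j)=0,
\]
by symmetry of $A$ and antisymmetry of $\bs_i-\bs_j$. Similarly, symmetrizing in $(i,j)$ gives
\[
\tfrac{\rd}{\rd\tau}\sum_i|\bz_i|^2=\tilde w\sum_{i,j}(\bz_i-\bz_j)\cdot A_{ij}(\bs_i-\bs_j)=0,
\]
because $A(\bz)\bz=0$. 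Hence $\bu_l$ and $\sum_i|\bz_i|^2$ are constant in $\tau$, and therefore so is $T_l$.

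For the Dougherty case, the discrete projection $\bs^\perp$ is built so that $\sum_i\bs^\perp_i=0$ and $\sum_i\bs^\perp_i\cdot(\bz_i-\bu_l)=0$. I will check these identities directly from the formula for $\bs^\perp$; the second uses $\sum_i(\bz_i-\bu_l)=0$. They give conservation of $\sum_i\bz_i$ and of $\sum_i|\bz_i-\bu_l|^2$ along the ODE.

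For the entropy part of (i), $\log f^{n+1}(\bx_i^{n+1},\bv_i^{n+1})=\log f^*(\bx_i^*,\bv_i^*)-\ell_i$. Therefore $H_l^{n+1}=H_l^*-\tilde w\sum_i\ell_i$, and it suffices to show $\sum_i\ell_i\ge 0$ at the minimizer. I would compare the minimizer with the admissible choice $\bs\equiv0$. That choice gives $\T_\tau=\mathrm{id}$, $\ell_i=0$, and objective value $0$. The minimal objective is thus $\le 0$. Its transport part is nonnegative, since $A$ is positive semidefinite in the Landau case and the other case is a sum of squares. Hence $-2\Delta t\,\tilde w\sum_i\ell_i\le 0$ in the Landau case, and likewise with the factor $T_l^*>0$ in the Dougherty case. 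This gives $H_l^{n+1}\le H_l^*$.

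This step is the main obstacle, because it relies on attaining (or at least not exceeding) the trivial value. It holds at an exact minimizer, or for any optimizer output whose loss is below that of the zero initialization. I would state this assumption explicitly.

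For (ii), the transport step with periodic boundary conditions leaves every velocity unchanged. It also leaves every $f$-value unchanged: $f^*(\bx_i^*,\bv_i^*)=f^n(\bx_i^n,\bv_i^n)$. So the global mass, momentum, energy and entropy are unchanged by transport.

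Binning only partitions the particles, and $w=\tilde w|\Omega_l|$. Summing the cell-wise conservation over $l$ therefore gives conservation of the global quantities. Cell-wise conservation of $\bu_l$ and $T_l$ gives conservation of $\sum_{i\in\Omega_l}\bv_i$ and $\sum_{i\in\Omega_l}|\bv_i|^2$, since $\sum|\bv_i|^2=N_l(d_vT_l+|\bu_l|^2)$.

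Finally, $H_{tot}^{n+1}=\sum_l|\Omega_l|H_l^{n+1}\le\sum_l|\Omega_l|H_l^*=H_{tot}^n$.
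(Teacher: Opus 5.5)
Your proposal is correct and follows essentially the same route as the paper. The paper's proof of the local claims only cites the earlier Landau result and omits the Dougherty case; you write that argument out in full, namely conservation built into the constraint ODE, the two orthogonality identities for $\bs^\perp$, and entropy decay by comparing the minimizer with the admissible choice $\bs\equiv 0$. Your global argument, via transport invariance and $H_{tot}=\sum_l|\Omega_l|H_l$, is identical to the paper's, and your caveat that the entropy inequality needs a loss no larger than the zero-control value makes explicit the exact-minimization assumption the paper leaves tacit.
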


\begin{proof}
Local claims for Landau collision follow from the same argument as \cite[Proposition 3.1]{huang2024jkolandau}. The Dougherty collision can be treated similarly, and we omit the proof for brevity. For global entropy, transport gives $H_{tot}^*=H_{tot}^n$, and
\begin{equation*}
    H_{tot}^n = w \sum_{i=1}^N \log f^n(\bx_i^n, \bv_i^n) = \sum_{l=1}^{N_c} |\Omega_l| H_l^n \,.
\end{equation*}
Thus $H_l^{n+1}\le H_l^*$ for all $l$ implies
$H_{tot}^{n+1}\le H_{tot}^*=H_{tot}^n$.
Global mass, momentum, and energy conservation follow from local conservation in collision plus conservation in transport (periodic BC).
\end{proof}

\subsection{Inner-time discretization}
The partition of the inner-time interval $\tau\in [0,1]$ into subintervals, along with the associated numerical quadrature, follows the strategy in \eqref{GL}. We now describe in detail how to update the ODE systems in \eqref{LD_PJKO} and \eqref{Dou_PJKO} on each subinterval.

Our default choice is the explicit fourth-order Runge–Kutta method (RK4), which provides a good balance between computational efficiency and accuracy. A potential drawback is that it does not exactly conserve energy at the discrete level. Nevertheless, numerical experiments indicate that the resulting energy error remains sufficiently small; see Table~\ref{tab:homo_err} in the next section.

If exact energy conservation at the discrete level is desired, we instead employ the implicit midpoint method (imRK2) \cite{Hirvijoki_2021, Jeyakumar_Kraus_Hole_Pfefferlé_2024}. More specifically, let $\bz_i^k \approx \T_{\tau^k}(\bv_i^*)$ in \eqref{LD_PJKO} and \eqref{Dou_PJKO} for $k = 0, \cdots K$. Then 
\begin{equation}\label{imRK2}
    \begin{split}
        \bz_i^{k+1}
        &= \bz_i^k + \Delta\tau \tilde w\sum_{j\in\Omega_l}
        A\!\left(\bz_i^{k+\frac12} \!-\! \bz_j^{k+\frac12}\right)
        \Big[\bs_\theta\!\left(\tau_{k+\frac12},\bz_i^{k+\frac12}\right)
            \!-\! \bs_\theta\!\left(\tau_{k+\frac12},\bz_j^{k+\frac12}\right)\Big], \\
        \bz_i^{k+1}
        &= \bz_i^k + \Delta\tau\, \bs_\theta^{\perp}\!\left(\tau_{k+\frac12},\bz_i^{k+\frac12}\right), 
    \end{split}
\end{equation}
for Landau and Dougherty collisions, respectively. 
Here \(\Delta\tau=\tau_{k+1}-\tau_k\), \(\tau_{k+\frac12}=\frac{\tau_k+\tau_{k+1}}{2}\), and \(\bz_i^{k+\frac12}=\frac{\bz_i^k+\bz_i^{k+1}}{2}\).

In practice, \eqref{imRK2} requires solving a nonlinear system at each inner-time step, and the system is increasingly ill-conditioned as $\varepsilon \to 0$. To resolve this challenge, we employ Broyden's method, a quasi-Newton scheme that mitigates ill-conditioning while avoiding costly Jacobian evaluations through inexpensive rank-one updates.

To illustrate the idea, we reformulate \eqref{imRK2} as a fixed-point system $\mathcal{G}(\mathbf{z})=\mathbf{z}$, where $\mathbf{z}:=(\bz_i^{k+1})_{i\in\Omega_l}$ collects all particle velocities in the spatial cell $\Omega_l$. Broyden's method iterates through
\begin{equation*}
\begin{aligned}
    &\mathbf{z}_{(m+1)}=\mathbf{z}_{(m)}-\eta_m J_{(m)}^{-1} (\mathcal{G}(\mathbf{z}_{(m)}) - \mathbf{z}_{(m)}) \,, \\
    &\Delta\mathbf{z}_{(m)}=\mathbf{z}_{(m+1)}-\mathbf{z}_{(m)} \,, \quad
    \Delta\mathcal{G}_{(m)}=\mathcal{G}(\mathbf{z}_{(m+1)}) - \mathbf{z}_{(m+1)} - \mathcal{G}(\mathbf{z}_{(m)}) + \mathbf{z}_{(m)} \,, \\
    &J_{(m+1)}^{-1} = J_{(m)}^{-1} + \frac{\Delta\mathbf{z}_{(m)} - J_{(m)}^{-1} \Delta\mathcal{G}_{(m)}}{(\Delta\mathbf{z}_{(m)})^\top J_{(m)}^{-1} \Delta\mathcal{G}_{(m)}} (\Delta\mathbf{z}_{(m)})^\top J_{(m)}^{-1} \,.
\end{aligned}
\end{equation*}
Here, $J_{(m)}^{-1}$ is the approximated inverse Jacobian of $\mathcal{G}_{(m)}$. The initial iterate $\mathbf{z}_{(0)}$ is provided by an explicit predictor (e.g., RK4 from $(\bz_i^k)_{i \in \Omega_l}$), and $J_{(0)}^{-1} = I$. 

To safeguard convergence, we equip with an Armijo backtracking line search on
the merit function $\Phi(\mathbf{z})=\|\mathcal{G}(\mathbf{z}) - \mathbf{z}\|_2^2$. The step size is set to $\eta_m=\beta^{r_m}$ with $0<\beta<1$, where $r_m$ is the smallest non-negative integer satisfying
\begin{equation*}
    \Phi(\mathbf{z}_{(m)}-\eta_m J_{(m)}^{-1}\mathcal{G}(\mathbf{z}_{(m)}))
    \le (1 - 2c \eta_m)\Phi(\mathbf{z}_{(m)}) \,,
\end{equation*}
with $c\in(0,1)$. The iteration terminates when $\|\mathcal{G}(\mathbf{z}_{(m)}) - \mathbf{z}_{(m)}\|_2 \le \texttt{tol}$ or upon reaching the maximum iteration count.

\subsection{Efficient training strategies}
The main practical bottleneck of implicit schemes lies in backpropagation. Training requires the gradient $\frac{\rd \mathbf{z}}{\rd \theta}$, while $\mathbf{z}$ is the fixed point produced by an iterative solver. Naively unrolling all iterations and applying automatic differentiation is both memory intensive and numerically fragile. To avoid this difficulty, one may instead compute the gradient via implicit differentiation. Observe that the dependence of \eqref{imRK2} on the trainable parameters $\theta$ is characterized by $\mathbf{z}(\theta) = \mathcal{G}(\mathbf{z}(\theta),\theta)$. Hence
\begin{equation*}
    \frac{\rd\mathbf{z}}{\rd\theta} = \left(I - \frac{\partial\mathcal{G}}{\partial\mathbf{z}} \right)^{-1} \frac{\partial \mathcal{G}}{\partial\theta}\,.
\end{equation*}
This, however, involves solving a Jacobian ($\frac{\partial\mathcal{G}}{\partial\mathbf{z}}$) dependent linear systems, which can be costly and cumbersome in practice. To address this issue, we adopt \textit{Jacobian-free backpropagation} (JFB) \cite{Fung_Heaton_Li_Mckenzie_Osher_Yin_2022}, which avoids such linear solves in the backward pass by simply setting
\begin{equation*}
    \frac{\rd\mathbf{z}}{\rd\theta} = \frac{\partial\mathcal{G}}{\partial\theta} \,.
\end{equation*}
Intuitively, this amounts to rewriting the fixed point system as $\mathbf{z}(\theta) = \mathcal{G}(\bar{\mathbf{z}}, \theta)$, where $\bar{\mathbf{z}} = \mathbf{z}$ denotes a stop-gradient copy of $\mathbf{z}$, detached from the dependence on $\theta$. Alternatively, this can be interpreted as adopting a zeroth-order approximation of the Neumann series  $\left(I - \frac{\partial\mathcal{G}}{\partial\mathbf{z}} \right)^{-1} = \sum_{k=0}^{\infty} (\frac{\partial\mathcal{G}}{\partial\mathbf{z}})^k$. The validity of this treatment has been analyzed in both the static case \cite{Fung_Heaton_Li_Mckenzie_Osher_Yin_2022} and the more relevant time-dependent case \cite{gelphman2026}.

Such modification provides fixed-memory training, a simpler implementation, and substantially improved computational efficiency. Specifically, we summarize the inner-time discretizations of \eqref{LD_PJKO} and \eqref{Dou_PJKO}, together with the implicit scheme \eqref{imRK2}, in the following abstract form:
\begin{flalign*}
    &\inf_{\theta}~ \mathcal{L} \!\left(\{\bz^k(\theta)\}_{k=0}^{K+1}, \theta \right) \,, \\
    &\qquad \text{s.t.}~~ \mathcal{G}(\bz^{k+1}(\theta), \bz^{k}(\theta), \theta) = \bz^{k+1}(\theta) ~~ \text{for}~~ k=0,1,\cdots,K \,.
\end{flalign*}
The implementation of JFB in the PyTorch library is summarized in Algorithm~\ref{alg:JFB}.

\begin{algorithm}[ht!]
\caption{Jacobian-free backpropagation}
\label{alg:JFB}
\begin{algorithmic}[1]
    \FOR{$k=0,1,\cdots,K$}
        \STATE \texttt{with torch.no\_grad():}
        \STATE \qquad Solve the fixed-point system $\mathcal{G}(\mathbf{z}, \bz^{k}(\theta),\theta)=\mathbf{z}$
        by an iterative solver.
        \STATE Re-evaluate $\bz^{k+1}(\theta)=\mathcal{G}(\mathbf{z}, \bz^{k}(\theta),\theta)$ to construct the computation graph.
    \ENDFOR
    \STATE Compute the $\operatorname{loss}=\mathcal{L} \!\left(\{\bz^k(\theta)\}_{k=0}^{K+1},\theta \right)$.
    \STATE \texttt{loss.backward()}
    \STATE \texttt{optimizer.step()}
\end{algorithmic}
\end{algorithm}

Another challenge arises in the computation of the Landau problem \eqref{LD_PJKO}, where the pairwise interaction terms dominate the computational cost, yielding a complexity of $\mathcal{O}(N_l^2)$ per cell $\Omega_l$. Following our previous work \cite{huang2024jkolandau}, we employ the same stochastic acceleration strategy by combining mini-batch SGD \cite{Bottou2018} with the random batch method (RBM) \cite{JIN2020108877, Carrillo_Jin_Tang_2022}. At each epoch, particles are randomly partitioned into batches of size $B_l \ll N_l$; interactions are evaluated only within each batch, and optimizer updates are performed batch by batch. Therefore, the total cost is reduced to $\mathcal{O}(B_l N_l)$. The procedure is summarized in Algorithm \ref{alg:SGD}.

\begin{algorithm}[ht!]
\caption{Mini-batch SGD combined with RBM}
\label{alg:SGD}
\begin{algorithmic}[1]
\REQUIRE Mini-batch size $B_l$
\FOR{each training epoch}
    \STATE Randomly divide the index set $\{i : i \in \Omega_l\}$ into $\lfloor \frac{N_l}{B_l} \rfloor$ batches.
    \FOR{each batch}
        \STATE Compute the trajectory ODE and the loss within the batch.
        \STATE Perform one optimization step.
    \ENDFOR
\ENDFOR
\end{algorithmic}
\end{algorithm}

\section{Numerical examples}\label{sec:4}
This section presents numerical tests for the proposed schemes, with focus on asymptotic-preserving behavior and structure-preservation. We first consider spatially homogeneous Landau and Dougherty collisions over a range of Knudsen numbers $\varepsilon$ to assess relaxation and conservation properties. We then study the spatially inhomogeneous Landau equation, where we examine global conservation, entropy dissipation, and the asymptotic-preserving property. All Landau tests use $\gamma=-3$, and all inhomogeneous examples are one-dimensional in space.

Unless otherwise specified, we use the following default setup. Particle positions are generated by stratified sampling, and particle velocities are sampled exactly when the initial velocity marginal is Gaussian or a Gaussian mixture. The velocity field $\bs_\theta(t,\bv)$ is represented by a 5-layer MLP \eqref{MLP} with 32 neurons per hidden layer and \texttt{SiLU} activation, $\sigma(z)=\frac{z}{1+e^{-z}}$. Biases are initialized to zero, and weights are sampled from a truncated normal distribution with variance $1/\texttt{fan\_in}$. For optimization, we use \texttt{AdamW} \cite{loshchilov2018decoupled} with default parameters and \texttt{CosineAnnealingWarmRestarts} \cite{loshchilov2017sgdr} for dynamic learning-rate adjustment. The important optimization hyperparameters include
\begin{enumerate}[(1)]
    \item $\eta_{\max}$: the initial learning rate
    \item $\eta_{\min}$: the minimum learning rate.
    \item $T_0$: number of iterations until the first restart.
    \item $T_{\max}$: number of total iterations.
    \item $B$: batch size.
\end{enumerate}

To further accelerate the variational collision step, we exploit its inherent parallelizability across spatial cells. The computational workload is evenly distributed across multiple GPUs according to the cell partitions. All experiments are implemented in PyTorch with single-precision and executed on four NVIDIA L40S GPUs at the Minnesota Supercomputer Institute.

\subsection{Homogeneous Landau and Dougherty collisions}

\subsubsection{Relaxation of a bi-Maxwellian (0D-2V)}
Consider a 2-dimensional example with bi-Maxwellian initial data
\begin{equation*}
    f^0(\bv) = \frac{1}{2\pi} \left(e^{-|\bv_x - 1|^2} + e^{-|\bv_x + 1|^2} \right) e^{-\bv_y^2} \,.
\end{equation*}
We set the Knudsen number $\varepsilon = \{1, 10^{-2}, 10^{-4}, 10^{-8} \}$, the time step size $\Delta t=0.01$, and the total number of particles $N=12800$. The optimization hyperparameters are set as follows: $(\eta_{\max}, \eta_{\min}, T_0, T_{\max}, B) = (10^{-2}, 10^{-3}, 20, 100, 1280)$ for the Landau collision; $(\eta_{\max}, \eta_{\min}, T_0, T_{\max}) = (10^{-2}, 10^{-4}, 50, 200)$ for the Dougherty collision.

Figures~\ref{fig:homoLD_f} and \ref{fig:homoDougherty_f} show the one-step density $f^{\Delta t}$ for Landau and Dougherty collisions at different $\varepsilon$. The method remains stable across all tested Knudsen numbers and drives the solution toward equilibrium in the stiff regime $\varepsilon \ll \Delta t$. In particular, the bottom-right panels indicate that the distance from $f^{\Delta t}$ to the Maxwellian $\mathcal{M}$ decreases as $\varepsilon \to 0$, where 
$\|f^{\Delta t} - \mathcal{M}\|_1 := \frac{1}{N} \sum_{i=1}^N \big| f^{\Delta t}(\bv_i) - \mathcal{M}(\bv_i) \big|.$
The slight non-monotonicity at extremely small $\varepsilon$ is due to particle-resolution limits. We also observe faster relaxation for Dougherty than for Landau, consistent with \cite{Pezzi_Valentini_Veltri_2015}.

Table~\ref{tab:homo_err} reports conservation errors. For Landau, momentum is conserved uniformly in $\varepsilon$, and the corresponding RK4 energy errors remain small. For Dougherty, the projected-WGF formulation yields markedly smaller momentum and energy errors than WGF due to the explicit conservation projection. The implicit midpoint solver is energy-conserving up to nonlinear-solver tolerance.

\begin{figure}[ht]
    \centerline{\includegraphics[scale=0.33]{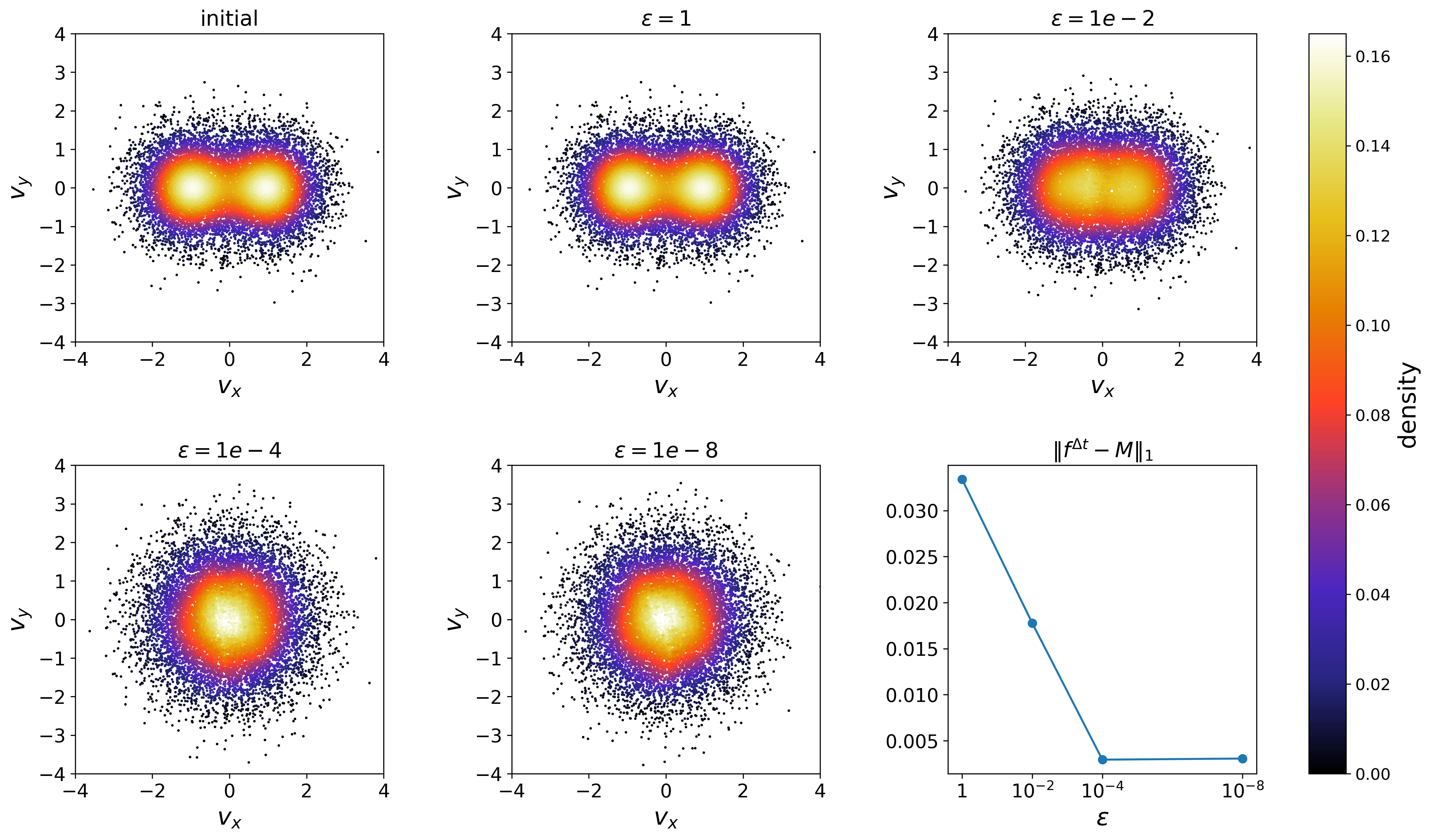}}
    \caption{Computed density $f^{\Delta t}$ for the Landau bi-Maxwellian example under various Knudsen numbers $\varepsilon$.}
    \label{fig:homoLD_f}
\end{figure}

\begin{figure}[ht]
    \centerline{\includegraphics[scale=0.33]{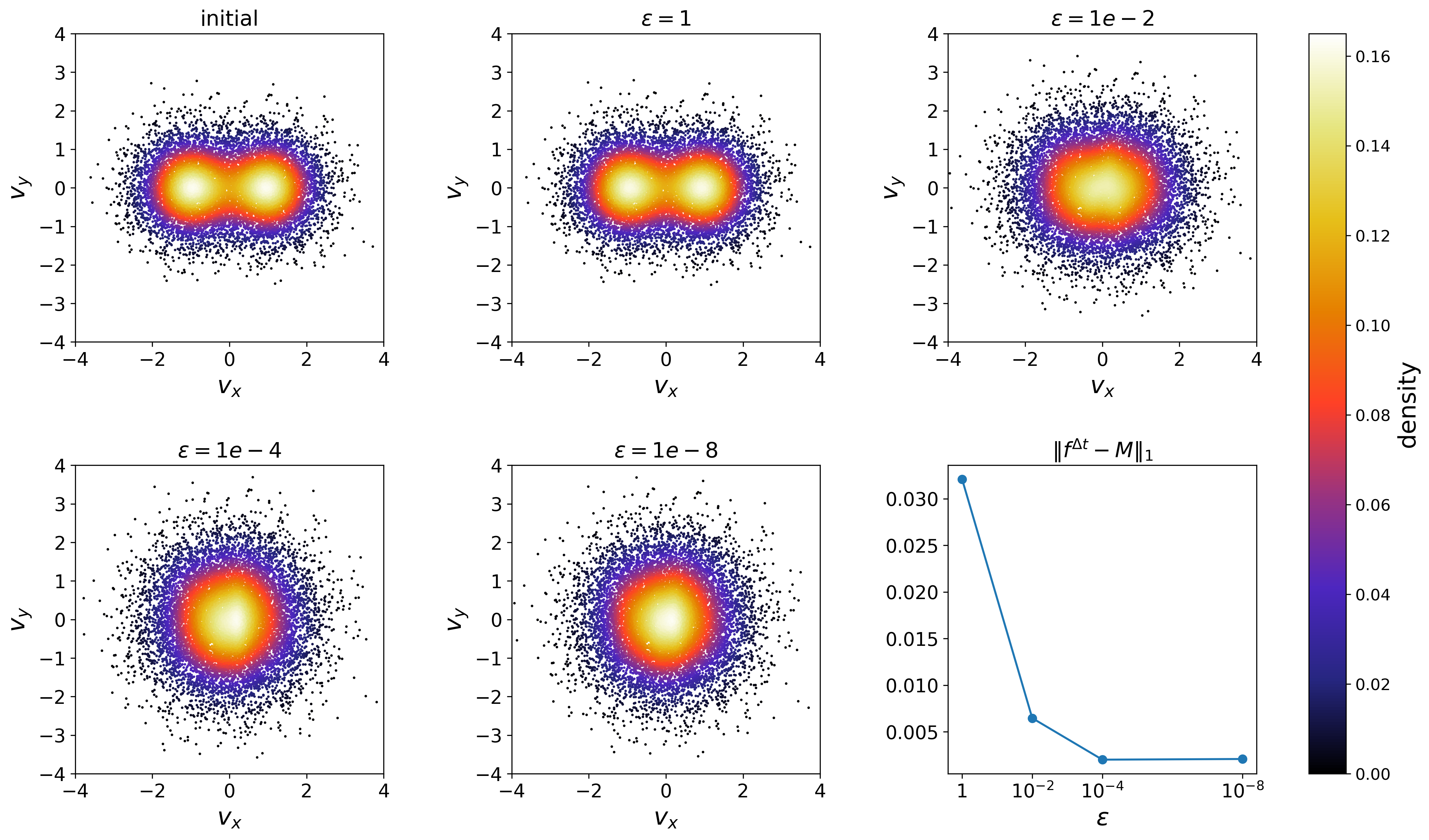}}
    \caption{Computed density $f^{\Delta t}$ for the Dougherty bi-Maxwellian example under various Knudsen numbers $\varepsilon$.}
    \label{fig:homoDougherty_f}
\end{figure}

\begin{table}[ht]
\small
\centering
\begin{tabular}{|c|c|c|c|c|}
    \hline
    & $\varepsilon = 1$ & $\varepsilon = 10^{-2}$ & $\varepsilon = 10^{-4}$ & $\varepsilon = 10^{-8}$ \\
    \hline
    \multicolumn{5}{c}{\textbf{Landau}} \\
    \hline
    momentum-x & 3.31426811e-09 & 1.45162296e-09 & 1.80800605e-09 & 5.99895764e-09 \\
    momentum-y & 6.85067661e-10 & 6.06702101e-09 & 4.67003714e-09 & 2.66412813e-09 \\
    energy (RK4) & 3.47823481e-09 & 3.47823481e-09 & 9.50196082e-07 & 2.14228898e-06 \\
    energy (imRK2) & 1.43667357e-07 & 9.47512220e-08 & 9.47512220e-08 & 9.47512220e-08 \\
    \hline
    \multicolumn{5}{c}{\textbf{Dougherty (projected-WGF)}} \\
    \hline
    momentum-x & 5.43274133e-09 & 4.03575746e-09 & 4.50141875e-09 & 7.76104776e-09 \\
    momentum-y & 2.52731651e-09 & 1.05111959e-08 & 1.59593934e-09 & 6.78590924e-09 \\
    energy (RK4) & 5.66743110e-08 & 5.66743110e-08 & 1.01034863e-06 & 7.71930048e-07 \\
    energy (imRK2) & 9.47512220e-08 & 1.43667357e-07 & 9.47512220e-08 & 9.47512220e-08 \\
    \hline
    \multicolumn{5}{c}{\textbf{Dougherty (WGF)}} \\
    \hline
    momentum-x & 4.35367446e-05 & 2.85005533e-06 & 9.95548662e-05 & 1.48417636e-04 \\
    momentum-y & 8.23002721e-06 & 4.04427095e-05 & 1.75976797e-04 & 1.03816261e-04 \\
    energy (RK4) & 8.95017184e-05 & 2.62697691e-02 & 3.84544710e-04 & 2.16746946e-04 \\
    \hline
\end{tabular}
\caption{Numerical errors of momentum and energy for the bi-Maxwellian example under various Knudsen numbers $\varepsilon$.}
\label{tab:homo_err}
\end{table}

\subsubsection{Relaxation of a discontinuous data (0D-3V)}
Now we consider a more challenging 3-dimensional example with discontinuous initial data
\begin{equation*}
    f^0(\bv) = 
    \begin{cases}
        \frac{\rho_1}{(2\pi T_1)^{3/2}} e^{-\frac{|\bv|^2}{2T_1}} \,, ~~ \text{for}~ \bv_x \leq 0 \,, \\
        \frac{\rho_2}{(2\pi T_2)^{3/2}} e^{-\frac{|\bv|^2}{2T_2}} \,, ~~ \text{for}~ \bv_x > 0 \,,
    \end{cases}
\end{equation*}
where $(\rho_1, T_1) = (\frac{16}{9}, 1)$ and $(\rho_2, T_2) = (\frac{2}{9}, \frac{1}{4})$. 

We set the Knudsen number $\varepsilon = \{1, 10^{-2}, 10^{-4} \}$, the time step size $\Delta t=0.01$, and the total number of particles $N=10^4$. The optimization hyperparameters are set as follows: $(\eta_{\max}, \eta_{\min}, T_0, T_{\max}, B) = (10^{-2}, 10^{-3}, 20, 100, 1000)$ for the Landau collision; $(\eta_{\max}, \eta_{\min}, T_0, T_{\max}) = (10^{-2}, 10^{-4}, 50, 200)$ for the Dougherty collision.

Fig.~\ref{fig:homoLD2_f} presents the histograms of the computed marginal density alongside the equilibrium marginal density for different Knudsen numbers $\varepsilon$, providing additional validation of the AP property of the proposed solver for the Landau and Dougherty collisions. We also present the momentum and energy error in Table~\ref{tab:homoLD2err}. Again, the momentum and the energy exhibit a very small numerical error.

\begin{figure}[ht]
    \centerline{\includegraphics[scale=0.35]{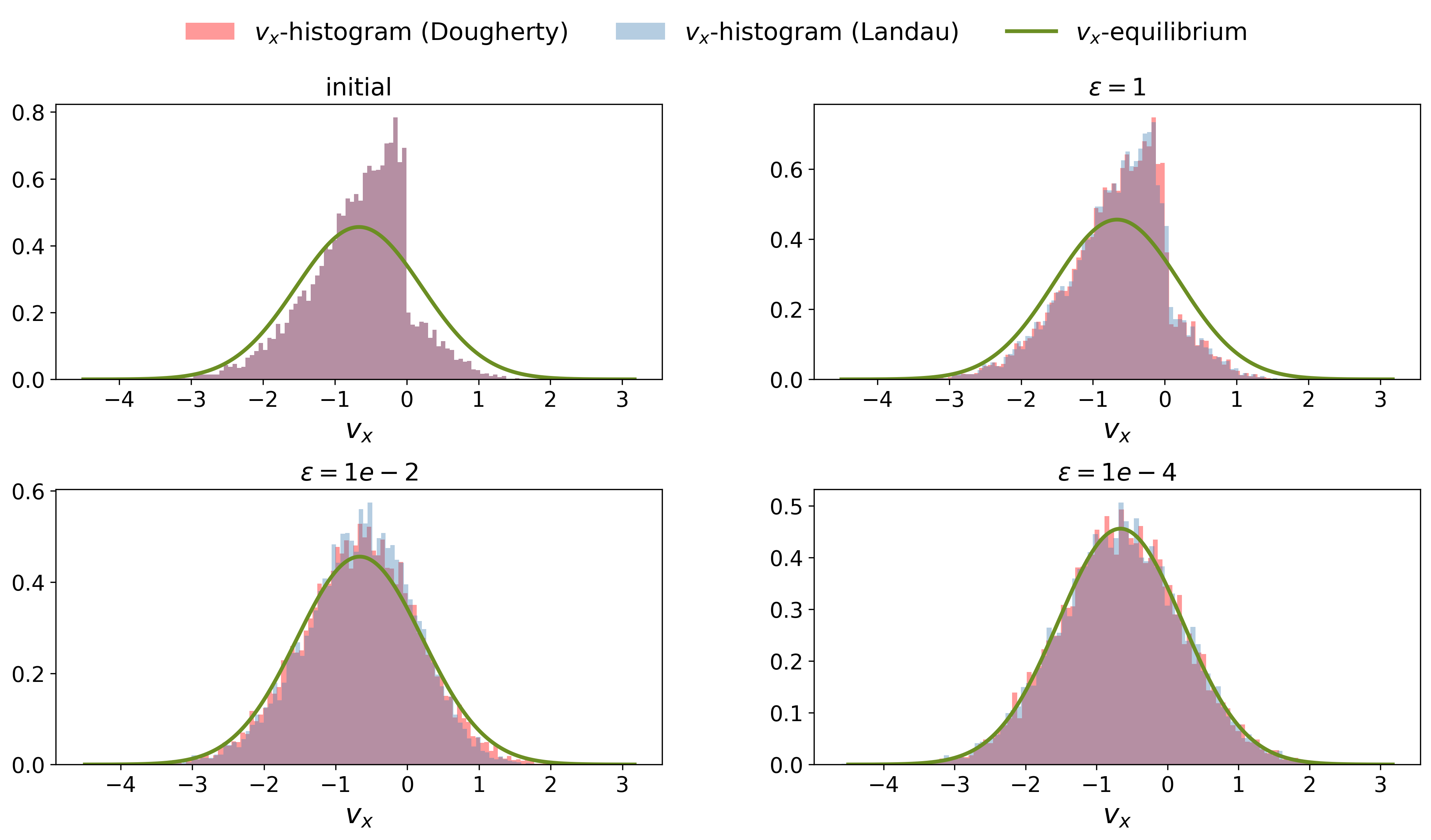}}
    \caption{Histograms of marginal densities for the discontinuous example under various Knudsen numbers $\varepsilon$.}
    \label{fig:homoLD2_f}
\end{figure}

\begin{table}[ht]
    \centering
    \small
    \begin{tabular}{ |c|c|c|c| }
        \hline
        & $\varepsilon=1$ & $\varepsilon=10^{-2}$ & $\varepsilon=10^{-4}$ \\ 
        \hline
        \multicolumn{4}{c}{\textbf{Landau}} \\
        \hline
        momentum-x & 9.38312661e-07 & 2.82661568e-07 & 1.11712659e-06 \\
        momentum-y & 1.19120407e-09 & 1.55726023e-08 & 2.12252665e-09 \\
        momentum-z & 2.83515353e-08 & 1.06564064e-08 & 1.36832047e-08 \\
        energy (RK4) & 4.02333118e-08 & 1.98185267e-07 & 4.57018631e-06 \\
        energy (imRK2) & 4.02333118e-08 & 4.02333118e-08 & 4.02333118e-08 \\
        \hline
        \multicolumn{4}{c}{\textbf{Dougherty (projected-WGF)}} \\
        \hline
        momentum-x & 4.42429889e-08 & 4.92198814e-07 & 1.65356840e-07 \\
        momentum-y & 1.23670750e-08 & 1.60276365e-09 & 1.42297201e-08 \\
        momentum-z & 3.54626288e-09 & 2.77606498e-08 & 2.71873821e-08 \\
        energy (RK4) & 1.98185267e-07 & 4.02333118e-08 & 7.90804642e-06 \\
        energy (imRK2) & 4.02333118e-08 & 4.02333118e-08 & 4.02333118e-08 \\
        \hline
    \end{tabular}
    \caption{Numerical errors of momentum and energy for the discontinuous example under various Knudsen numbers $\varepsilon$.}
    \label{tab:homoLD2err}
\end{table}

\subsubsection{Training efficiency}
We now assess the training efficiency of our method using the previous Landau bi-Maxwellian example. We first examine the benefit of evaluating \(\log |\det \nabla_{\bv}\T_1(\bv)|\) by quadrature rules over ODE solvers. The left panel of Figure~\ref{fig:runtime} shows the average per-epoch runtime using a 5-point Gauss--Legendre rule and a 6-stage RK4 method. Both implementations scale as \(\mathcal{O}(N)\) with respect to the number of particles, while the quadrature-based approach is approximately \(2.6\times\) faster. This improvement is practically significant for the inhomogeneous tests, where a single simulation may take several hours.

We also compare the runtime of the implicit midpoint and RK4 solvers for the particle dynamics discretization. The right panel of Figure~\ref{fig:runtime} shows the per-epoch runtime as a function of the Knudsen number \(\varepsilon\). The runtime of RK4 is essentially insensitive to \(\varepsilon\), whereas the implicit midpoint method is more expensive because each inner-time step requires solving a nonlinear system. Nevertheless, with a quasi-Newton solver and JFB training, the implicit method remains efficient in practice: the number of quasi-Newton iterations stays small, typically around five even in the stiff regime \(\varepsilon=10^{-5}\), while the training avoids cumbersome backward pass and large memory overhead.

Finally, Figure~\ref{fig:loss} illustrates the training behavior for different Knudsen numbers \(\varepsilon\). The left panel shows the stochastic training loss in Algorithm~\ref{alg:SGD}, while the right panel reports the corresponding true loss evaluated on all particles. Owing to random batching, the training loss continues to oscillate after the initial transient, especially for small \(\varepsilon\). Nevertheless, the full loss decreases rapidly and then stabilizes for all tested \(\varepsilon\), indicating that the stochastic optimization remains effective. Moreover, the full loss decays on essentially the same epoch scale across all values of \(\varepsilon\), suggesting that the optimization is only weakly affected by \(\varepsilon\).

\begin{figure}[ht]
    \centerline{\includegraphics[scale=0.4]{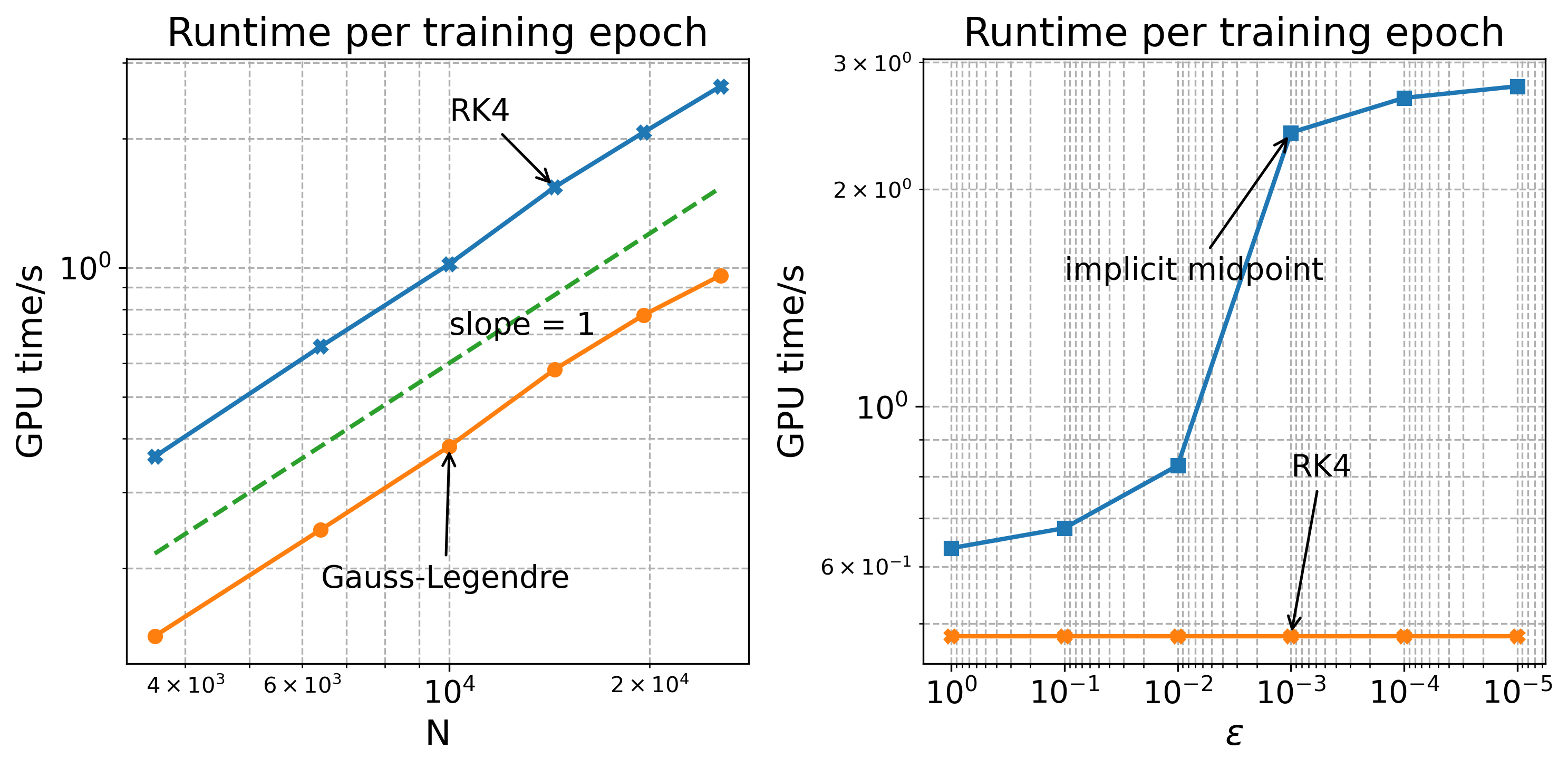}}
    \caption{Average GPU runtime per training epoch for the Landau bi-Maxwellian example. Left: comparison between computing $\log |\det \nabla_{\bv} \T_1(\bv)|$ by the RK4 solver and by the Gauss--Legendre quadrature rule as the particle number $N$ varies. Right: comparison between solving particle dynamics by the RK4 solver and by the implicit midpoint solver for various Knudsen numbers $\varepsilon$.}
    \label{fig:runtime}
\end{figure}

\begin{figure}[ht]
    \centerline{\includegraphics[scale=0.4]{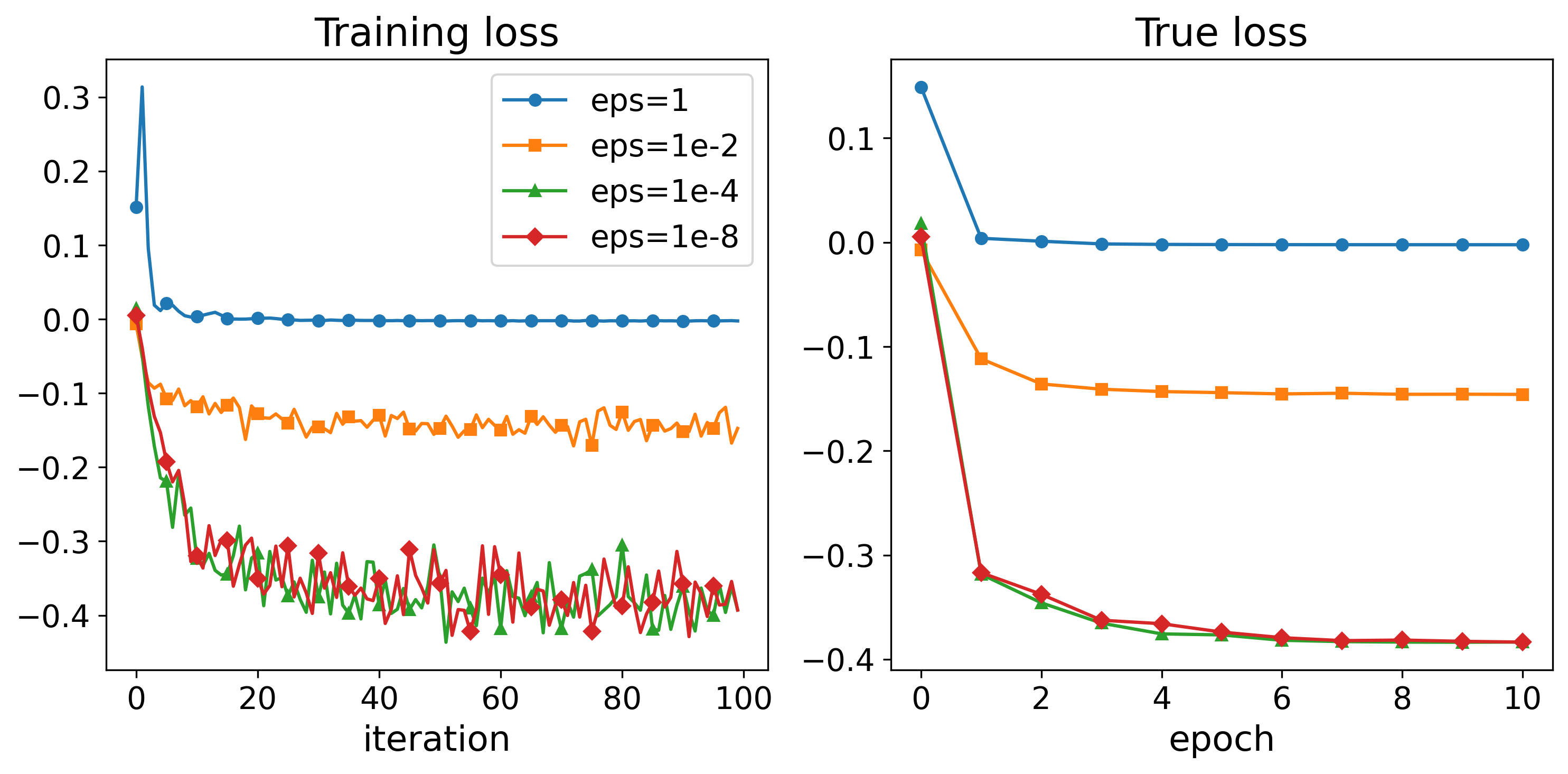}}
    \caption{Training behavior of the Landau bi-Maxwellian example under various Knudsen numbers $\varepsilon$. Left: stochastic mini-batch training loss versus epoch. Right: the corresponding full loss evaluated on all particles. Here $1 \operatorname{epoch} = 10 \operatorname{iterations}$.}
    \label{fig:loss}
\end{figure}

\subsection{Inhomogeneous Landau collision}
\subsubsection{Global conservation and entropy dissipation (1D-2V)}
We first check the global conservation and entropy dissipation property of our solver. The initial condition is given by 
\begin{equation*}
    f^0(\bx, \bv) = \frac{1}{2}\frac{\rho(x)}{2\pi} \left(e^{-\frac{(\bv_x - 1)^2}{2}} + e^{-\frac{(\bv_x + 1)^2}{2}} \right) e^{-\frac{\bv_y^2}{2}} \,,~~ \rho(x) = \frac{2+\sin(\pi x)}{3} \,.
\end{equation*}
The computational domain is chosen as $\Omega = [-1,1]$ and is divided into $N_c = 100$ disjoint cells. The Knudsen number is set to $\varepsilon = 1$ and the time step size is $\Delta t=0.1$. The total number of particles is $N=10^6$. For the optimization hyperparameters, we set $(\eta_{\max}, \eta_{\min}, T_0, T_{\max}, B) = (0.02, 0.01, 10, 40, 1500)$.

We plot the time evolution of the total energy error, total momentum error, and total entropy in Fig.~\ref{fig:noneq}. As expected, the total momentum is conserved, and the total energy exhibits only a very small error. The total entropy decreases over time.

\begin{figure}[ht]
    \centerline{\includegraphics[scale=0.33]{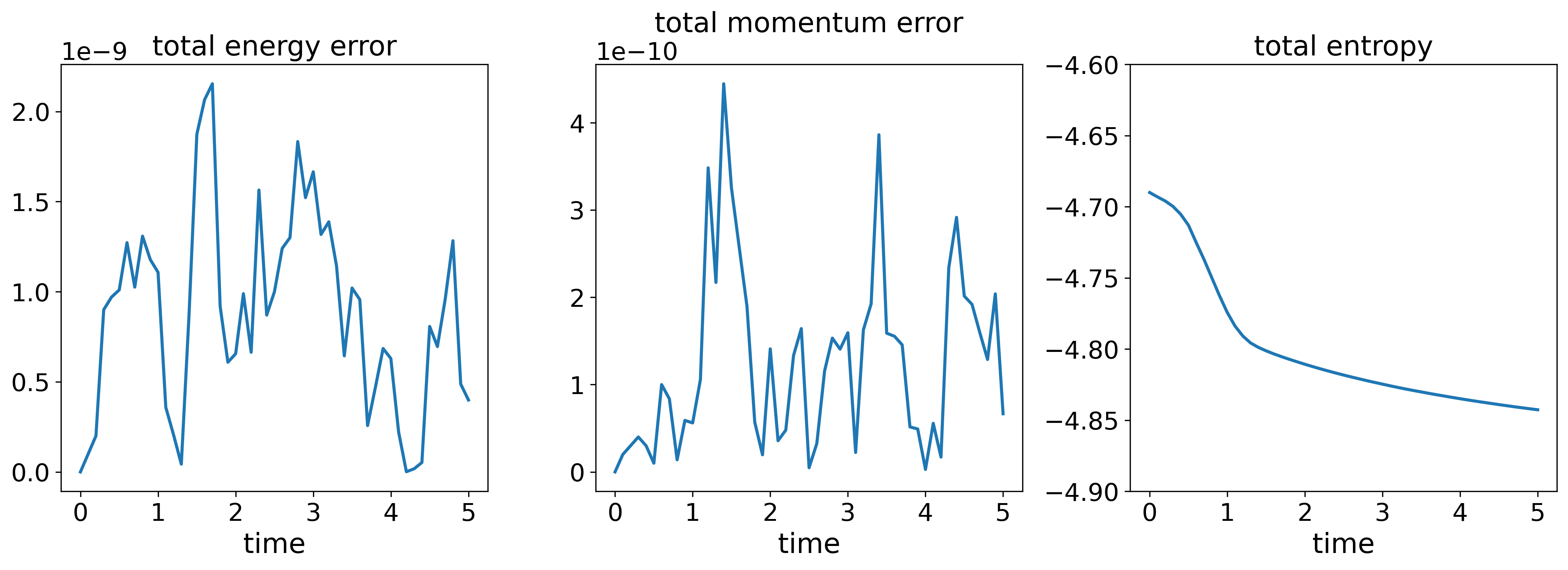}}
    \caption{Global conservation and entropy dissipation for an example with non-equilibrium initial data. Left: time evolution of total energy error. Center: time evolution of total momentum error. Right: time evolution of total entropy.}
    \label{fig:noneq}
\end{figure}

\subsubsection{Mixing regime (1D-2V)}
We then consider a mixed kinetic and fluid regimes where the Knudsen number $\varepsilon$ increases smoothly from a small $\varepsilon_0$ to $\mathcal{O}(1)$ and then jumps back to $\varepsilon_0$ in space:
\begin{equation*}
    \varepsilon(x) = 
    \begin{cases}
        \varepsilon_0 + \frac{1}{2} (\tanh(5-10x) + \tanh(5+10x)) \,, & x \leq 0.3 \,, \\
        \varepsilon_0 \,, & x > 0.3 \,,
    \end{cases}
\end{equation*}
with $\varepsilon_0 = 10^{-3}$. The picture of $\varepsilon$ is shown in the left plot of Fig.~\ref{fig:mix2}. The initial data is given by a local Maxwellian \eqref{local_eq} with $\rho(x) = \frac{2+\sin(\pi x)}{3}$, $\bu=(0.2,0)$, and $T(x) = \frac{3+\cos(\pi x)}{4}$. The periodic boundary condition in $x$ is applied. 

In this test, we divide the computational domain $\Omega = [-1,1]$ into $N_c = 100$ disjoint cells. We use a time step size of $\Delta t=0.01$ and $N=10^6$ particles, and set hyperparameters $(\eta_{\max}, \eta_{\min}, T_0, T_{\max}, B) = (10^{-2}, 10^{-3}, 10, 40, 1500)$. 

The center and right panels of Fig.~\ref{fig:mix2} show that, even when the Knudsen number $\varepsilon$ is spatially non-uniform, the total momentum is conserved and the total energy is preserved up to a very small error. We further compare our method with the penalization method \cite{JIN20116420}. Fig.~\ref{fig:mix} displays the local macroscopic quantities at $t=0.2$ and $t=0.4$, which are in close agreement with the reference solution.

\begin{figure}[ht]
    \centerline{\includegraphics[scale=0.33]{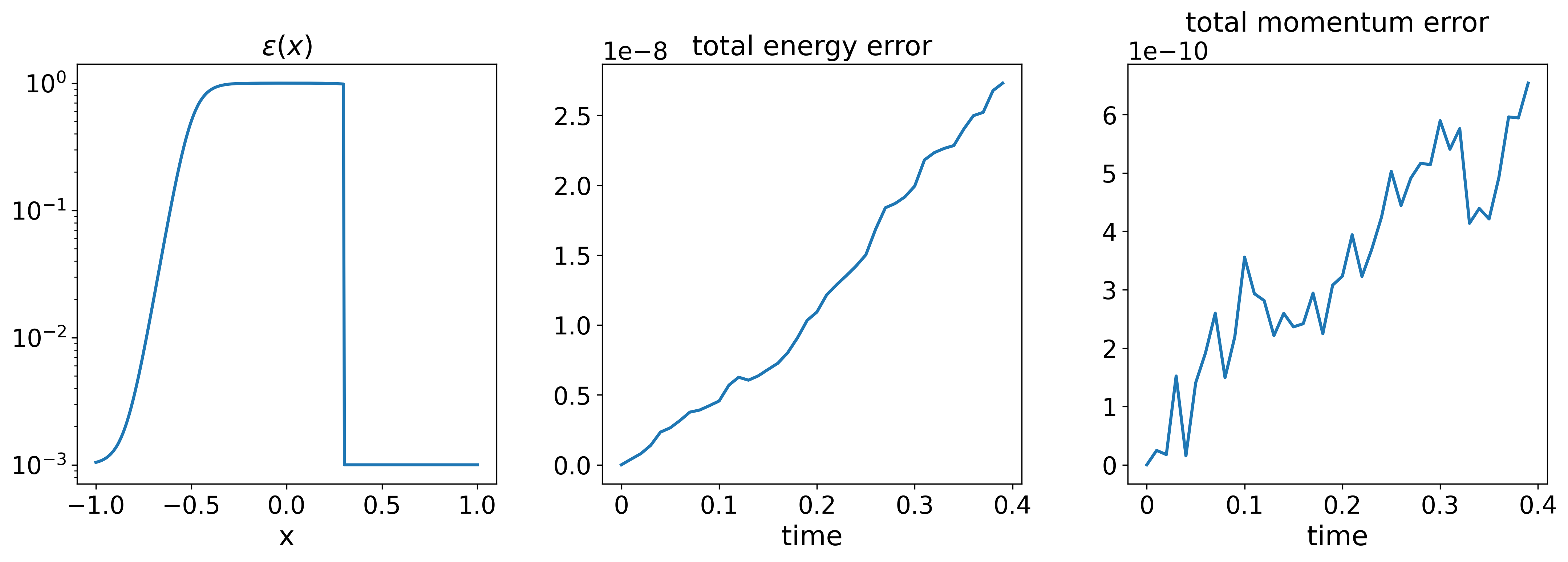}}
    \caption{A problem with mixing regime. Left: the spatially varying Knudsen number $\varepsilon$. Center: time evolution of total energy error. Right: time evolution of total momentum error.}
    \label{fig:mix2}
\end{figure}
\begin{figure}[ht]
    \centerline{\includegraphics[scale=0.38]{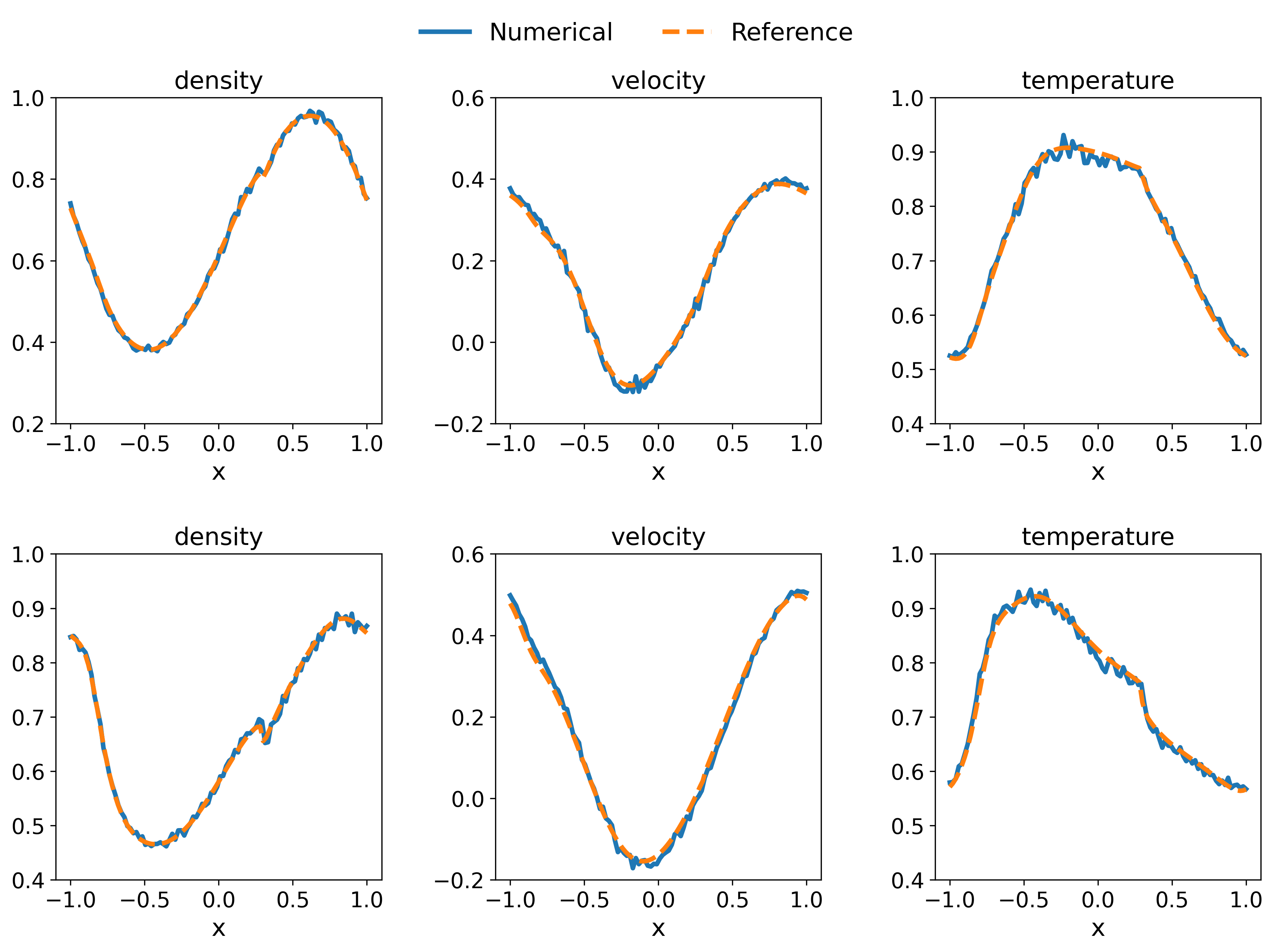}}
    \caption{The comparison of density, velocity, and temperature at $t=0.2$ (top) and $t=0.4$ (bottom) between the numerical solution and the reference solution by the penalization method.}
    \label{fig:mix}
\end{figure}

\subsubsection{The Riemann problem (1D-3V)}
Lastly, we simulate the Sod shock tube problem, a standard benchmark for assessing the ability to capture unsteady flow profiles and accurately resolve shock waves and contact discontinuities. We choose domain $\Omega = [0,1]$, where the initial condition is given by a local Maxwellian \eqref{local_eq} with $\bu=(0,0,0)$ and
\begin{equation*}
    \begin{cases}
        (\rho \,, T) = (1 \,, 1) \,,~~ \text{if}~ 0 \leq x \leq 0.5 \,, \\
        (\rho \,, T) = (\frac{1}{8} \,, \frac{1}{4}) \,,~~ \text{if}~ 0.5 < x \leq 1 \,.
    \end{cases}
\end{equation*}
The reflecting boundary condition in $x$ is applied.

In this test, we divide $\Omega$ into $N_c = 200$ disjoint cells. We use a time step size of $\Delta t=0.005$ and set the Knudsen number to $\varepsilon = 10^{-6}$. The total number of particles is $N=9 \times 10^5$. For the optimization hyperparameters, we set $(\eta_{\max}, \eta_{\min}, T_0, T_{\max}, B) = (0.01, 0.005, 10, 40, 1500)$. Fig.~\ref{fig:Riemann} presents the macroscopic results at $t=0.1$ and $t=0.2$, which converge to the hydrodynamic solution provided by the exact Riemann solver for the Euler system \eqref{Euler}.

\begin{figure}[ht]
    \centerline{\includegraphics[scale=0.38]{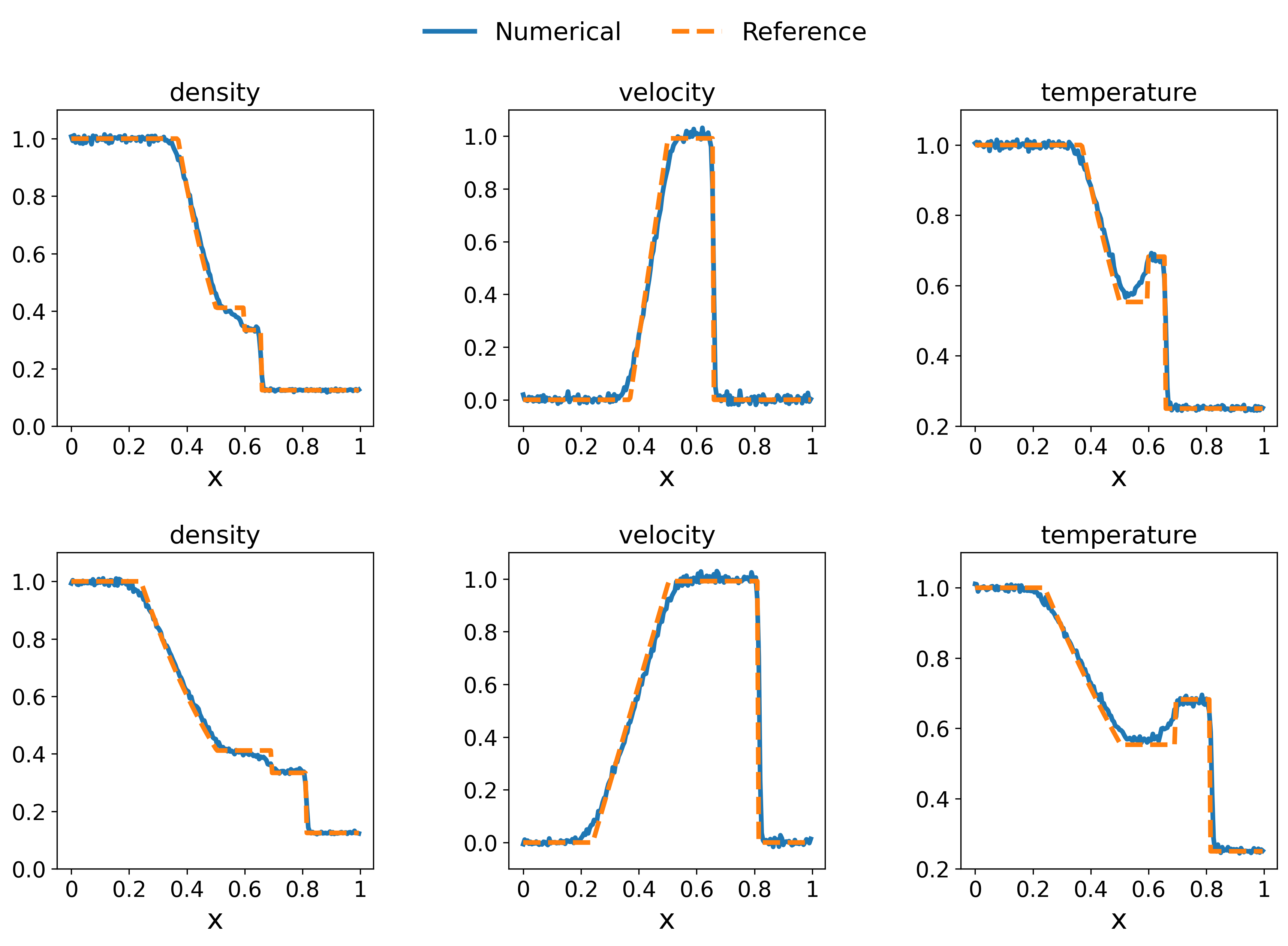}}
    \caption{The comparison of density, velocity, and temperature at $t=0.1$ (top) and $t=0.2$ (bottom) between the numerical solution and the exact Riemann solution for the Euler equations.}
    \label{fig:Riemann}
\end{figure}

\section{Conclusion} \label{sec:5}
In this work, we developed asymptotic-preserving deterministic particle methods for collisional plasma models under hydrodynamic scaling, covering both the Landau–Fokker–Planck and Dougherty collision operators. The key challenge in this regime is the treatment of stiff collision terms. In contrast to traditional grid-based approaches, which suffer from the curse of dimensionality, and existing particle methods, which are often stochastic or hybrid and introduce statistical noise, our approach provides a fully deterministic and structure-preserving framework.

Our method is built upon the gradient flow formulation of the collision operators combined with the minimizing movement (JKO) scheme. For the Landau collision operator, our approach may appear to be a direct extension of our previous work \cite{huang2024jkolandau}. However, we show that a critical difficulty arises from the presence of the small parameter, which affects the accurate evaluation of the entropy-dominated objective function in the fluid regime. To address this issue, we propose an inner-time quadrature strategy that improves both accuracy and efficiency. A similar treatment applies to the Dougherty collision operator; however, an additional subtlety arises: when formulated as a vanilla Wasserstein gradient flow, conservation of momentum and energy is lost at the particle level. To retain exact conservation, we instead propose a projected gradient flow formulation. Beyond algorithmic development, we also established connections between our variational framework and score-matching approaches, showing that both explicit and implicit score matching can be viewed as special cases of our formulation, while potentially facing limitations in stiff regimes. From a computational perspective, the integration of particle discretization, neural network parameterizations, and efficient training strategies enables our method to scale to high-dimensional problems.

Looking ahead, extending the framework to incorporate electromagnetic effects and multispecies interactions will be essential for realistic plasma simulations. In addition, while we employed an operator-splitting strategy in this work, the development of fully coupled (delocalized) JKO schemes remains an interesting direction for future research.

\section*{Acknowledgments}
We would like to thank Dr. Jiequn Han for insightful discussions. 
YH would also like to thank Dr. Howard Heaton and Prof. Samy Wu Fung for the discussion on Jacobian-free backpropagation. We acknowledge GPT-5.4 for its assistance in coding and refining the language and improving the clarity of the manuscript.

\appendix
\section{Energy loss in vanilla JKO for the Dougherty operator}\label{apdx:A}
Here we show that the vanilla JKO scheme \eqref{WGF_JKO} for the Dougherty operator preserves momentum but not energy. For simplicity, we assume $\rho^n=1$ and denote $\alpha=\Delta t / \varepsilon$. 

The dynamic JKO scheme \eqref{WGF_JKO} admits the following Monge formulation: given \(f^n\), we obtain \(f^{n+1}=\T\sharp f^n\) by solving
\begin{equation*}
    \inf_{\T} \int_{\R^{d_v}} |\T(\bv) - \bv|^2 f^n(\bv)\,\rd\bv + 2\alpha\, T^n \mathcal{H}\left(\T \sharp f^n \mid \M_{U^n} \right).
\end{equation*}
The corresponding optimality condition is 
\begin{equation}\label{apdx:a.1}
    \T(\bv)-\bv = -\alpha \left( T^n\nabla \log f^{n+1}(\T(\bv)) + \T(\bv)-\bu^n \right).
\end{equation}

We first prove conservation of momentum. Integrating both sides of \eqref{apdx:a.1} against \(f^n\), and using the identity $\int_{\R^{d_v}} \nabla \log f^{n+1}(\T(\bv)) f^n(\bv)\,\rd\bv = \int_{\R^{d_v}} \nabla f^{n+1} \,\rd\bv = 0$, we obtain $\bu^{n+1} - \bu^n = -\alpha\,(\bu^{n+1} - \bu^n)$, which implies that $\bu^{n+1} = \bu^n$.

Then we show that energy is not conserved. To this end, define the centered second moment $I^n:=\int_{\R^{d_v}} |\bv-\bu^n|^2 f^n \rd\bv$. Since $f^{n+1}=\T {\sharp} f^n$ and $\bu^{n+1} = \bu^n$, we have $I^{n+1} = \int_{\R^{d_v}} |\bv - \bu^n|^2 f^{n+1} \rd\bv = \int_{\R^{d_v}} |\T(\bv)-\bu^n|^2 f^n \rd\bv$. Hence
\begin{flalign*}
    I^{n+1} - I^{n}
    =& \int_{\R^{d_v}} (\T(\bv)-\bv) \cdot (\T(\bv)+\bv-2\bu^n) f^n \rd\bv \\
    =& \int_{\R^{d_v}} |\T(\bv)-\bv|^2 f^n \rd\bv + 2 \int_{\R^{d_v}} (\T(\bv)-\bv) \!\cdot\! (\bv-\bu^n) f^n \rd\bv \\
    =& - \underbrace{\int_{\R^{d_v}} |\T(\bv)-\bv|^2 f^n \rd\bv}_{\mathcal{I}_1} + 2 \underbrace{\int_{\R^{d_v}} (\T(\bv)-\bv) \!\cdot\! (\T(\bv) - \bu^n) f^n \rd\bv}_{\mathcal{I}_2} \,.
\end{flalign*}
Note that $\mathcal{I}_1 = W_2^2(f^{n+1}, f^n)$ and
\begin{flalign*}
    \mathcal{I}_2
    =& -\alpha \left( \int_{\R^{d_v}} T^n\nabla\log f^{n+1}(\T(\bv))\cdot (\T(\bv)-\bu^n) f^n \rd\bv + \int_{\R^{d_v}}|\T(\bv)-\bu^n|^2 f^n \rd\bv \right) \\
    =& -\alpha \left(\int_{\R^{d_v}} T^n\nabla f^{n+1} \cdot (\bv-\bu^n) \rd\bv + I^{n+1} \right) \\
    =& -\alpha \left(-d_v T^n + I^{n+1} \right) 
    = -\alpha \left(I^{n+1} - I^n \right) .
\end{flalign*}
Then $I^{n+1}=I^n-\frac{1}{1+2\alpha}\,W_2^2(f^{n+1},f^n)$. Since $I^n=2E^n-|\bu^n|^2$ and $I^{n+1}=2E^{n+1}-|\bu^n|^2$, we obtain the energy dissipation $E^{n+1}=E^n-\frac{1}{2(1+2\alpha)}\,W_2^2(f^{n+1},f^n)$.

\section{The geometry of the Dougherty operator}\label{apdx:B}
\paragraph{{Otto's Riemannian formalism}}
Let $\mathcal{P}_2$ denote the space of probability densities on $\R^{d_v}$ with finite second moment. In Otto's formalism \cite{villani2003topics}, $\mathcal{P}_2$ is viewed as an infinite-dimensional Riemannian manifold, with the 2-Wasserstein distance $W_2$ inducing a natural metric $g^W$ on its tangent bundle. More precisely, for each $f \in \mathcal{P}_2$, the tangent space at $f$, denoted by $\mathcal{T}_f\mathcal{P}_2$, is defined as
\begin{equation*}
    \mathcal{T}_f\mathcal{P}_2 := \left\{ - \nabla_{\bv} \cdot \bigl(f \nabla_{\bv}\phi\bigr) : \nabla_{\bv}\phi \in L_f^2 \right\}.
\end{equation*}
Given two tangent vectors $h_1, h_2 \in \mathcal{T}_f\mathcal{P}_2$, the metric $g^W$ is defined by
\begin{flalign*}
    g^W(h_1, h_2) = & \int_{\R^{d_v}} \nabla_{\bv} \phi_1(\bv) \cdot \nabla_{\bv} \phi_2(\bv) f(\bv) \rd\bv \,, \\
    & \text{where}~ h_i = - \nabla_{\bv} \cdot(f \nabla_{\bv}\phi_i) \,, \nabla_{\bv}\phi_i \in L^2_f ~\text{for}~ i=1,2 \,.
\end{flalign*}

\paragraph{Geometry of conservative subspace $\mathcal{P}_2^{con}$}
Any tangent vector $h \in \mathcal{T}_f \mathcal{P}_2^{con}$ must satisfy additional constraints $\int_{\R^{d_v}} h\bv \rd\bv = \int_{\R^{d_v}} h|\bv|^2 \rd\bv = 0$ due to the conservation of momentum and energy. Hence, the tangent space $\mathcal{T}_f\mathcal{P}_2^{con}$ is characterized as
\begin{equation*}
    \mathcal{T}_f\mathcal{P}_2^{con} = \left\{ - \nabla_{\bv} \cdot(f \nabla_{\bv}\phi) : \nabla_{\bv}\phi \in L^2_f \,, \int_{\R^{d_v}} \nabla_{\bv}\phi f \rd\bv = 0 \,, \int_{\R^{d_v}} \bv \cdot \nabla_{\bv}\phi f \rd\bv = 0 \right\} .
\end{equation*}
Since $\mathcal{T}_f \mathcal{P}_2$ endowed with the inner product $g^W$ is a Hilbert space, the closed subspace $\mathcal{T}_f \mathcal{P}_2^{con}$ admits a unique orthogonal complement space:
\begin{equation*}
    \mathcal{T}_f^{\perp} \mathcal{P}_2^{con} = \left\{ -\nabla_{\bv}\cdot (f \nabla_{\bv}\phi) : \phi \in\operatorname{span}\{\bv, |\bv|^2\} \right\} \,.
\end{equation*}

\paragraph{Projected gradient of entropy functional}
The Wasserstein gradient of the entropy $\mathcal{H}(f) = \int_{\R^{d_v}} f \log f \rd\bv$ is given by $\nabla_{W_2} \mathcal{H}(f) = -\nabla_{\bv} \cdot (f\nabla_{\bv} \tfrac{\delta \mathcal{H}}{\delta f})$. With the characterization of $\mathcal{T}_f^{\perp} \mathcal{P}_2^{con}$, we can express the projection of $\nabla_{W_2} \mathcal{H}(f)$ onto $\mathcal{T}_f \mathcal{P}_2^{con}$ as
\begin{equation*}
    \operatorname{Proj}_f ( \nabla_{W_2} \mathcal{H}(f) ) = -\nabla_{\bv} \cdot (f \nabla_{\bv} (\tfrac{\delta \mathcal{H}}{\delta f} - \phi)) \,, ~\text{with}~ \phi = \boldsymbol{a} \cdot \bv + b|\bv|^2 \,.
\end{equation*}
These coefficients $\boldsymbol{a} \in \R^{d_v}$ and $b \in \R$ are uniquely determined by the constraints for tangent vectors on $\mathcal{T}_f \mathcal{P}_2^{con}$,
\begin{equation*}
    \int_{\R^{d_v}} \nabla_{\bv} (\tfrac{\delta \mathcal{H}}{\delta f} - \phi) f \rd\bv = 0 \,,~~ \int_{\R^{d_v}} \bv \cdot \nabla_{\bv} (\tfrac{\delta \mathcal{H}}{\delta f} - \phi) f \rd\bv = 0 \,,
\end{equation*}
which yields $\boldsymbol{a} = \frac{\bu}{T}$ and $b=-\frac{1}{2T}$. Consequently, 
\begin{equation*}
    \operatorname{Proj}_f (\nabla_{W_2} \mathcal{H}(f)) = -\nabla_{\bv} \cdot (f (\nabla_{\bv} \log f + \tfrac{\bv-\bu}{T})) \,.
\end{equation*}

\section{Proofs for optimality conditions in Section~\ref{sec:2.2}}\label{apdx:C}
Here we provide a detailed derivation of the optimality conditions for the following variational problems appearing in Section~\ref{sec:2.2}. For simplicity, we denote $\alpha = \Delta t/\varepsilon$. 
\begin{flalign*}
    \inf_{\bs} ~ J_1[\bs] &= \int_{\R^{d_v}} \left[|\bs(\bv + \bs (\bv))|^2 - 2 \alpha (\nabla \!\cdot\! \bs)(\bv + \bs(\bv)) \right] f^n(\bv) \rd\bv \,, \\
    \inf_{\T} ~ J_2[\T] &= \int_{\R^{d_v}} \left( |\T(\bv) - \bv|^2 - 2\alpha \log|\det\nabla_{\bv} \T(\bv)| \right) f^n(\bv) \rd\bv \,, \\
    \inf_{\T} ~ J_3[\T] &= \int_{\R^{d_v}} \left(|\T(\bv) - \bv|^2 + 2\alpha \tr\left((\nabla_{\bv} \T(\bv))^{-1}\right)\right) f^n(\bv) \rd\bv \,.
\end{flalign*}
We first state a lemma which will be used later.
\begin{lemma}[Piola identity]\cite{Steinmann2015}\label{lem:piola}
Let $F$ be a $C^1$ vector field. Then
\begin{equation*}
    \nabla_{\bv} \cdot\big(\cof \nabla_{\bv}\T ~ F(\T(\bv))\big) = \det \nabla_{\bv}\T ~ (\nabla \cdot F)(\T(\bv)) \,.
\end{equation*}
\end{lemma}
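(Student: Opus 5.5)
The identity is pointwise, so I will prove it by direct differentiation, assuming $\T$ is $C^2$ so that second derivatives commute. Write $C := \cof \nabla_{\bv}\T$, with entries $C_{ij} = \partial \det(\nabla_{\bv}\T)/\partial(\partial_j \T_i)$. Expanding the divergence componentwise by the product rule gives two pieces:
\begin{equation*}
    \nabla_{\bv}\cdot\big(C^\top F(\T)\big) = \sum_{i,j} \partial_{v_j} C_{ij}\, F_i(\T(\bv)) + \sum_{i,j,k} C_{ij}\, (\partial_k F_i)(\T(\bv))\, \partial_{v_j}\T_k(\bv) \,.
\end{equation*}
I am using the convention that $\cof\nabla\T\, F$ means $\operatorname{adj}(\nabla\T)\,F = (\cof\nabla\T)^\top F$. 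With that convention the contraction runs over the column index, which is what Piola needs, and I will state this explicitly.

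\textbf{Second term.} This piece is purely algebraic. By the adjugate identity,
\begin{equation*}
    \sum_j C_{ij}\,\partial_j \T_k = \det(\nabla\T)\,\delta_{ik} \,,
\end{equation*}
that is, $(\cof \nabla\T)^\top \nabla\T = \det(\nabla\T)\, I$. The second sum therefore collapses to $\det(\nabla\T)\sum_i (\partial_i F_i)(\T) = \det\nabla_{\bv}\T\,(\nabla\cdot F)(\T(\bv))$, which is exactly the right-hand side.

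\textbf{First term: the classical Piola identity.} It remains to show that the first term vanishes, i.e.
\begin{equation*}
    \sum_j \partial_{v_j} C_{ij} = 0 \quad \text{for each } i \,.
\end{equation*}
This is the main step. I would prove it in two ways, falling back on the second if the first gets messy.

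\emph{Determinant-expansion route.} Write $C_{ij}$ as a signed sum of products of $\partial_b\T_a$ with $a \neq i$ and $b \neq j$, via Levi-Civita symbols:
\begin{equation*}
    C_{ij} = \tfrac{1}{(d-1)!}\,\epsilon_{i a_2\cdots a_d}\,\epsilon_{j b_2\cdots b_d}\,\partial_{b_2}\T_{a_2}\cdots\partial_{b_d}\T_{a_d} \,.
\end{equation*}
Differentiating in $v_j$ and summing over $j$, each resulting term contains $\partial_j\partial_{b_m}\T_{a_m}$. This factor is symmetric in $(j,b_m)$, while $\epsilon_{j b_2\cdots}$ is antisymmetric in the same pair, so every term cancels.

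\emph{Density/weak route.} Pair $\sum_j\partial_j C_{ij}$ with a test function $\varphi$ and integrate by parts. Then, for $\nabla\T$ invertible, change variables $\bz = \T(\bv)$ and use $C^\top = \det(\nabla\T)\,(\nabla\T)^{-1}$. The result reduces to $\int \partial_{z_i}\psi\,\rd\bz = 0$, and a density argument covers the non-invertible case.

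The only real obstacle is the sign and index bookkeeping in the Levi-Civita cancellation. Conceptually it is just the symmetry of second derivatives.
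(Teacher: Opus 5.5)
Your proof is correct. The product-rule split, the adjugate identity for the term carrying $\nabla F$, and the Levi-Civita cancellation showing $\sum_j \partial_{v_j} C_{ij}=0$ (symmetric second derivatives contracted against an antisymmetric symbol) together give a complete argument for $\T\in C^2$. You therefore never need the weak route, which would in any case require localizing to small supports via the inverse function theorem before changing variables. The paper offers no proof of this lemma and only cites \cite{Steinmann2015}. What you wrote is the standard textbook argument, so there is nothing to reconcile.

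Your convention $(\cof\nabla_{\bv}\T)^\top F$ is not cosmetic; it is forced. With the Jacobian convention $(\nabla_{\bv}\T)_{ij}=\partial_{v_j}\T_i$ that the paper uses in Appendix~\ref{apdx:C}, the literal product $\cof\nabla_{\bv}\T\,F$ makes the statement false, because the \emph{columns} of the cofactor matrix are not divergence-free. For example, take $\T(\bv)=(v_1+v_2^2/2,\,v_2)$. Then $\det\nabla_{\bv}\T=1$ and $\cof\nabla_{\bv}\T$ has rows $(1,0)$ and $(-v_2,1)$. For $F=\mathbf{e}_1$ the right-hand side is $0$. However, $\cof\nabla_{\bv}\T\,F=(1,-v_2)^\top$ has divergence $-1$, while $(\cof\nabla_{\bv}\T)^\top F=(1,0)^\top$ has divergence $0$.

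Your reading is also what Appendix~\ref{apdx:C} actually uses. There $\nabla_{\bv}\cdot$ acts row-wise on matrix fields, and row $i$ of the identity with the scalar $F=f^{n+1}$ is your statement applied to $f^{n+1}\mathbf{e}_i$. The matrix-valued application reduces the same way, row by row, because $\cof\nabla_{\bv}\T$ commutes with $(\nabla_{\bv}\T)^{-\top}$.

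One small bookkeeping slip: $\sum_j C_{ij}\partial_{v_j}\T_k=\det(\nabla_{\bv}\T)\,\delta_{ik}$ is the identity $\cof(\nabla_{\bv}\T)\,(\nabla_{\bv}\T)^\top=\det(\nabla_{\bv}\T)\,I$. It is not $(\cof\nabla_{\bv}\T)^\top\nabla_{\bv}\T=\det(\nabla_{\bv}\T)\,I$, although both hold, so your conclusion is unaffected.
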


\paragraph{Optimality condition of $J_1$}
Consider perturbation $\bs_{\eta}(\bv) = \bs(\bv) +\eta\bxi(\bv)$ with $\bxi\in C_c^\infty(\R^{d_v};\R^{d_v})$. Denote $\T(\bv) = \bv + \bs(\bv)$ and $f^{n+1} = \T \sharp f^n$. Note that 
\begin{flalign*}
    \tfrac{\rd}{\rd\eta} |_{\eta=0} ~ \bs_{\eta}(\bv + \bs_{\eta}(\bv)) 
    &= \bxi(\T(\bv)) + (\nabla \bs)(\T(\bv)) \cdot \bxi(\bv) \,, \\
    \tfrac{\rd}{\rd\eta} |_{\eta=0} ~ (\nabla \cdot \bs_{\eta})(\bv + \bs_{\eta}(\bv)) 
    &= (\nabla \cdot \bxi)(\T(\bv)) + (\nabla(\nabla \cdot \bs))(\T(\bv)) \cdot \bxi(\bv) \,.
\end{flalign*}
Hence
\begin{flalign*}
    0&=\tfrac{\rd}{\rd\eta} |_{\eta=0} ~ J_1[\bs_\eta] \\
    &=2\int_{\R^{d_v}} \big[ \bs(\T(\bv)) \cdot \tfrac{\rd}{\rd\eta}|_{\eta=0} ~ \bs_{\eta}(\bv + \bs_{\eta}(\bv)) - \alpha \tfrac{\rd}{\rd\eta}|_{\eta=0} ~ (\nabla \cdot \bs_{\eta})(\bv + \bs_{\eta}(\bv)) \big] f^n(\bv) \rd\bv \\
    &=2\int_{\R^{d_v}} [\bs(\T(\bv)) \cdot \bxi(\T(\bv)) - \alpha(\nabla \cdot \bxi)(\T(\bv))]f^n(\bv) \rd\bv \\
    & \qquad + 2 \int_{\R^{d_v}} [(\nabla\bs)(\T(\bv)) - \alpha (\nabla(\nabla \cdot \bs))(\T(\bv))]f^n(\bv) \cdot \bxi(\bv) \rd\bv \\
    &=2\int_{\R^{d_v}} [\bs(\bv) \cdot \bxi(\bv) - \alpha(\nabla \cdot \bxi)(\bv)]f^{n+1}(\bv) \rd\bv \\
    & \qquad + 2 \int_{\R^{d_v}} [(\nabla\bs)(\T(\bv)) - \alpha (\nabla(\nabla \cdot \bs))(\T(\bv))]f^n(\bv) \cdot \bxi(\bv) \rd\bv \\
    &= 2 \int_{\R^{d_v}} \!\! \left\{ \bs(\bv)f^{n+1}(\bv) + \alpha\nabla f^{n+1}(\bv) + [(\nabla\bs) - \alpha (\nabla(\nabla \cdot \bs))](\T(\bv)) f^n(\bv) \right\} \cdot \bxi(\bv) \rd\bv \,.
\end{flalign*}
Since $\bxi$ is arbitrary, we must have
\begin{equation*}
    \bs(\bv)f^{n+1}(\bv) + \alpha\nabla f^{n+1}(\bv) + [(\nabla\bs) - \alpha (\nabla(\nabla \cdot \bs))](\T(\bv)) f^n(\bv) = 0 \,.
\end{equation*}
Dividing both sides by $f^{n+1}$, we obtain 
\begin{equation*}
    \bs(\bv) = -\alpha\nabla\log f^{n+1}(\bv) + \left[\alpha (\nabla(\nabla \cdot \bs)) - (\nabla\bs) \right](\T(\bv)) \tfrac{f^n(\bv)}{f^{n+1}(\bv)} \,.
\end{equation*}

\paragraph{Optimality condition of $J_2$}
Consider perturbation $\T_\eta(\bv)=\T(\bv)+\eta\,\bxi(\bv)$ with $\bxi\in C_c^\infty(\R^{d_v};\R^{d_v})$. 
A direct computation of variations yields
$\frac{\rd}{\rd\eta} |_{\eta=0} ~ |\T_\eta(\bv)-\bv|^2 = 2(\T(\bv)-\bv)\cdot \bxi(\bv)$
and
\(\frac{\rd}{\rd\eta}|_{\eta=0} ~ \log|\det \nabla_{\bv} \T_\eta| = \tr\left((\nabla_{\bv} \T)^{-1}(\nabla_{\bv} \bxi)\right) = (\nabla_{\bv} \T)^{-\top} \!:\! \nabla_{\bv} \bxi \). 
Hence
\begin{flalign*}
    0=\tfrac{\rd}{\rd\eta} |_{\eta=0} ~ J_2[\T_\eta]
    &=2\int_{\R^{d_v}}\left[(\T(\bv)-\bv)\cdot\bxi-\alpha(\nabla_{\bv}\T)^{-\top}:\nabla_{\bv}\bxi\right]f^n\,\rd\bv \\
    &=2\int_{\R^{d_v}}\left[(\T(\bv)-\bv)f^n+\alpha\nabla_{\bv}\cdot\big((\nabla_{\bv}\T)^{-\top}f^n\big)\right]\cdot\bxi\,\rd\bv \,,
\end{flalign*}
where we use the integration by parts in the last equality. Since $\bxi$ is arbitrary, then 
\begin{equation*}
    (\T(\bv)-\bv)f^n+\alpha\nabla_{\bv} \cdot \big((\nabla_{\bv}\T)^{-\top}f^n\big) = 0 \,.
\end{equation*}
By Brenier's theorem \cite{villani2003topics}, $\T = \nabla_{\bv}\phi$ for some convex function $\phi$, and $\det\nabla_{\bv}\T = \det \nabla_{\bv}^2 \phi \geq 0$. Recall that $(\nabla_{\bv}\T)^{-\top}=\frac{\cof\nabla_{\bv}\T}{\det\nabla_{\bv}\T}$ and the change of variable formula $f^n(\bv) = f^{n+1}(\T(\bv)) \det\nabla_{\bv} \T$. It then follows that
\begin{equation*}
    (\T(\bv)-\bv)f^n+\alpha \nabla_{\bv}\cdot\big(\cof\nabla_{\bv}\T \, f^{n+1}(\T(\bv))\big) = 0 \,.
\end{equation*}
Apply Lemma~\ref{lem:piola} with $F=f^{n+1}$ gives 
\begin{equation*}
    (\T(\bv)-\bv)f^n + \alpha \det\nabla_{\bv}\T ~ (\nabla f^{n+1})(\T(\bv)) = 0 \,.
\end{equation*}
Use the change of variable formula again, we obtain
\begin{equation*}
    (\T(\bv)-\bv) + \alpha(\nabla\log f^{n+1})(\T(\bv)) = 0 \,.
\end{equation*}

\paragraph{Optimality condition of $J_3$}
Note that $\tfrac{\rd}{\rd\eta}|_{\eta=0} ~ \tr\left((\nabla_{\bv} \T_\eta)^{-1}\right)
= \\ - \tr\left((\nabla_{\bv} \T)^{-1}(\nabla_{\bv} \bxi)(\nabla_{\bv} \T)^{-1}\right) 
= - \left((\nabla_{\bv} \T)^{-\top}(\nabla_{\bv} \T)^{-\top}\right) \!:\! \nabla_{\bv} \bxi$.
Then
\begin{flalign*}
    0=\tfrac{\rd}{\rd\eta}|_{\eta=0} ~ J_3[\T_\eta]
    &=2\int_{\R^{d_v}}\Big[(\T(\bv)-\bv)\cdot\bxi
    -\alpha\big((\nabla_{\bv}\T)^{-\top}(\nabla_{\bv}\T)^{-\top}\big):\nabla_{\bv}\bxi\Big]f^n\,\rd\bv \\
    &=2\int_{\R^{d_v}}\Big[(\T(\bv)-\bv)f^n+\alpha\nabla_{\bv}\cdot\big((\nabla_{\bv}\T)^{-\top}(\nabla_{\bv}\T)^{-\top}f^n\big)\Big]\cdot\bxi\,\rd\bv \,,
\end{flalign*}
where we use the integration by parts in the last equality. Since $\bxi$ is arbitrary, then 
\begin{equation*}
    (\T(\bv)-\bv)f^n+\alpha\nabla_{\bv}\cdot\big((\nabla_{\bv}\T)^{-\top}(\nabla_{\bv}\T)^{-\top}f^n\big) = 0 \,.
\end{equation*}

Now we assume $\det\nabla_{\bv}\T > 0$. This assumption is natural, as it is also satisfied by the true optimal transport map. Arguing as before, we obtain
\begin{equation*}
    (\T(\bv)-\bv)f^n+\alpha\nabla_{\bv} \cdot\Big(\cof\nabla_{\bv}\T \, (\nabla_{\bv}\T)^{-\top} f^{n+1}(\T(\bv))\Big) = 0 \,.
\end{equation*}
Note that $(\nabla_{\bv}\T(\bv))^{-\top}= ((\nabla \T^{-1})(\T(\bv)))^\top$. Therefore, applying Lemma~\ref{lem:piola} with $F=(\nabla \T^{-1})^\top f^{n+1}$, yields
\begin{equation*}
    (\T(\bv)-\bv)+\alpha\tfrac{\nabla\cdot((\nabla \T^{-1})^\top f^{n+1})}{f^{n+1}} (\T(\bv)) = 0 \,.
\end{equation*}
Expanding the gradient computation, we obtain
\begin{equation*}
    (\T(\bv)-\bv)+\alpha \left((\nabla \T^{-1})^\top \nabla\log f^{n+1} + \Delta \T^{-1} \right) (\T(\bv)) = 0 \,.
\end{equation*}

\bibliographystyle{siamplain}
\bibliography{ref}
\end{document}